\newtheorem{theorem}{Theorem}[section]
\newtheorem{lemma}[theorem]{Lemma}
\theoremstyle{remark}
\newcommand {\SN} {{\mathbb N}}
\newcommand {\SR} {{\mathbb R}}
\newcommand {\SC} {{\mathbb C}}
\newcommand {\SU} {{\mathbb U}}
\newcommand {\SD} {{\mathbb D}}
\newcommand{\bsp}{\begin{split}}
\newcommand{\esp}{\end{split}}
\newcommand{\be}{\begin{equation}}
\newcommand{\ee}{\end{equation}}
\newcommand{\bes}{\begin{equation*}}
\newcommand{\ees}{\end{equation*}}
\newcommand{\bv}\boldsymbol{}
\numberwithin{equation}{section}
\renewcommand{\pmod}[1]{~({\rm mod}\,#1)}
\begin{document}

\title[Pretentious multiplicative functions and primes in arithmetic progressions]{Pretentious multiplicative functions and the prime number theorem for arithmetic progressions}
\author{Dimitris Koukoulopoulos}
\address{D\'epartement de math\'ematiques et de statistique\\
Universit\'e de Montr\'eal\\
CP 6128 succ. Centre-Ville\\
Montr\'eal, QC H3C 3J7\\
Canada}
\email{koukoulo@dms.umontreal.ca}

\subjclass[2010]{11N05, 11N13, 11M06, 11M20}
\keywords{Prime number theorem for arithmetic progressions, multiplicative functions, Hal\'asz's theorem, elementary proofs of the prime number theorem, Siegel's theorem}

\date{\today}

\begin{abstract} Building on the concept of \textit{pretentious multiplicative functions}, we give a new and largely elementary proof of the best result known on the counting function of primes in arithmetic progressions.
\end{abstract}

\maketitle

\section{Introduction}\label{intro} In 1792 or 1793 Gauss conjectured that the number of primes up to $x$ is asymptotic to $x/\log x$, as $x\to\infty$, or, equivalently\footnote{It is customary to work with $\psi(x)$ instead of $\pi(x)=\sum_{p\le x}1$ for technical reasons. The two functions can be easily related using partial summation.}, that $$\psi(x):=\sum_{n\le x}\Lambda(n):=\sum_{\substack{p^k\le x\\p\ \text{prime},\,k\ge1}}\log p\sim x\quad(x\to\infty).$$ In 1896 this statement, now known as the Prime Number Theorem, was proven independently by de la Vall\'ee Poussin and Hadamard. Various authors improved upon this result and, currently, the best estimate known for $\psi(x)$, due to Korobov~\cite{korobov} and Vinogradov~\cite{vinogradov}, is
\be\label{pnt_e2}
\psi(x)=x+O\left(xe^{-c(\log x)^{3/5}(\log\log x)^{-1/5}}\right)\quad(x\ge3),
\ee
for some constant $c$. This result is far from what we expect to be the truth, since the Riemann Hypothesis is equivalent to the formula \be\label{rh}
\psi(x)=x+O(\sqrt{x}\log^2x)\quad(x\ge2)
\ee
(see \cite[p. 113]{Dav}).

The route to relation~\eqref{pnt_e2} is well known~\cite[Theorem 3.10]{titch}. Consider the Riemann $\zeta$ function, defined by $\zeta(s)=\sum_{n\ge1}1/n^s$ when $\Re(s)>1$. The better upper bounds we have available for $\zeta$ close to the line $\Re(s)=1$, the better estimates for $\psi(x)-x$ we can show. The innovation of Korobov and Vinogradov lies precisely on obtaining sharper bounds for $\zeta$, which can be reduced to estimating exponential sums of the form $\sum_{N<n\le2N}n^{it}$. However, the transition from these estimates to the error term in the prime number theorem uses heavily the analyticity of the Riemann $\zeta$ function.

For several years it was an open question whether it is possible to prove the prime number theorem circumventing the use of Complex Analysis. Philosophically speaking, prime numbers are defined in a very elementary way and there is no reason a priori why one should have to use such sophisticated machinery as the theory of complex analytic functions to understand their distribution. So it was a major breakthrough when Selberg~\cite{Sel1} and Erd\H os~\cite{Erd} succeeded in producing a completely elementary proof of the prime number theorem, in the sense that their arguments made no use of complex variables. Their method was subsequently improved significantly by Bombieri~\cite{bombieri} and Wirsing~\cite{wirsing}, and ultimately by Diamond and Steinig~\cite{DiamSt} who showed that 
$$
\psi(x)=x+O\left(xe^{-(\log x)^{1/7}(\log\log x)^{-2}} \right)
$$
Later, Daboussi~\cite{daboussi} gave a different completely elementary proof of the prime number theorem in the equivalent form
$$
\sum_{n\le x}\mu(n)=o(x)\quad(x\to\infty),
$$
where $\mu$ is the M\"obius function, defined to be $(-1)^{\#\{p|n\}}$ on squarefree integers $n$ and 0 otherwise. We refer the reader to~\cite[Section 2.4]{ik} and~\cite{goldfeld,granville} for further discussion about the elementary and the analytic proofs of the prime number theorem.

In the present paper we devise a new proof of~\eqref{pnt_e2} that uses only elementary estimates and Fourier inversion, thus closing the gap between complex analytic and more elementary methods. More generally, we give a new proof of the best estimate known for the number of primes in an arithmetic progression, which is Theorem~\ref{pntap} below\footnote{In fact, this theorem does not seem to have appeared in print before, even though it is well known to the experts. The needed zero-free region is mentioned in the notes of Chapter 9 of~\cite{mont2}; deducing Theorem~\ref{pntap} is then standard\ \cite[p. 120, 126]{Dav}. See also~\cite{LZ}.}. Our starting point is a proof by Iwaniec and Kowalski~\cite[p. 40-42]{ik} that the M\"obius function satisfies
\be\label{pnt ik}
\sum_{n\le x}\mu(n)\ll_A\frac x{(\log x)^A}\quad(x\ge2)
\ee
for any fixed positive $A$ (and hence that $\psi(x)=x+O_A(x/(\log x)^A)$ for all $A$). We improve upon this result by inserting into the proof some ideas from sieve methods together with estimates for exponential sums of the form $\sum_{N<n\le2N}(n+u)^{it}$, essentially due to van der Corput~\cite[Chapter 5]{titch} and Korobov-Vinogradov~\cite[Chapter 2]{walfisz}.

\begin{theorem}\label{pntap} Fix $A>0$. For $x\ge1$ and $(a,q)=1$ with $q\le(\log x)^A$ we have that
$$
\sum_{\substack{n\le x\\n\equiv a\pmod q}}\Lambda(n)=\frac x{\phi(q)}+O\left(xe^{-c_A(\log x)^{3/5}(\log\log x)^{-1/5}}\right)
$$
for some constant $c_A$, which cannot be computed effectively.
\end{theorem}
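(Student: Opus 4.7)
The plan is to reduce Theorem~\ref{pntap} to a mean-value estimate for the multiplicative function $f=\mu\chi$, where $\chi$ ranges over Dirichlet characters modulo $q$, and then to prove that estimate within the pretentious framework. By orthogonality of characters,
$$
\sum_{\substack{n\le x\\n\equiv a\pmod q}}\Lambda(n)=\frac{1}{\phi(q)}\sum_{\chi\pmod q}\bar\chi(a)\sum_{n\le x}\Lambda(n)\chi(n),
$$
and a partial-summation argument (as in the Iwaniec--Kowalski proof of~\eqref{pnt ik}) converts estimates for $\sum_{n\le x}\mu(n)\chi(n)$ into estimates for $\sum_{n\le x}\Lambda(n)\chi(n)$. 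The principal character produces the main term $x/\phi(q)$, so the task reduces to proving
$$
\sum_{n\le x}\mu(n)\chi(n)\ll xe^{-c_A(\log x)^{3/5}(\log\log x)^{-1/5}}
$$
uniformly for all $\chi$ modulo $q\le(\log x)^A$, together with the analogous bound for $\chi_0$ (essentially the classical Korobov--Vinogradov estimate for $\sum_{n\le x}\mu(n)$).

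Next, I would invoke a Hal\'asz-type mean-value theorem: for any $1$-bounded multiplicative function $f$ and any $T\ge 1$,
$$
\frac{1}{x}\Bigl|\sum_{n\le x}f(n)\Bigr|\ll (1+M)e^{-M}+\frac{1}{\sqrt T}+\frac{\log\log x}{\log x},\qquad M:=\min_{|t|\le T}\sum_{p\le x}\frac{1-\Re f(p)p^{-it}}{p}.
$$
Applied to $f=\mu\chi$, this reduces the problem to establishing the lower bound $M\ge c_A(\log x)^{3/5}(\log\log x)^{-1/5}$, with $T$ chosen to be a suitable power of $\log x$. Equivalently, one needs
$$
\sum_{p\le x}\frac{\Re\chi(p)p^{-it}}{p}\le -c_A(\log x)^{3/5}(\log\log x)^{-1/5}+O(1)
$$
uniformly for $|t|\le T$, which says that the multiplicative function $\chi(n)n^{-it}$ does not pretend to be the constant $1$ on the primes up to $x$.

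To establish this non-pretentiousness I would split the range of $t$ in two. For $|t|\ge(\log x)^{-B}$, with $B=B(A)$ chosen appropriately, I would decompose the prime sum $\sum_{p\le x}\chi(p)p^{-it}\log p$ via a Vaughan- or Heath-Brown-type combinatorial identity into Type-I and Type-II bilinear pieces. The inner sums that arise are then controlled by the van der Corput $AB$-process for moderate $|t|$ and by the Vinogradov--Korobov method of Weyl sums for large $|t|$, applied via Fourier inversion to the shifted sums $\sum_{N<n\le 2N}(n+u)^{it}$; summing the saving dyadically over $N$ and the relevant shifts $u$ yields exactly the Korobov--Vinogradov exponent $3/5$. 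For $|t|<(\log x)^{-B}$, where $p^{-it}$ is essentially constant over $p\le x$, the required cancellation is equivalent to a quantitative Siegel-type lower bound of shape $L(1,\chi)\gg_\varepsilon q^{-\varepsilon}$; this is what introduces the ineffectivity of $c_A$.

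The main obstacle, and the heart of the paper, is the transfer from exponential sum bounds on $\sum_{N<n\le2N}(n+u)^{it}$ --- statements about smooth sums over integers --- to genuine cancellation in the prime sum, \emph{without} falling back on contour integration or on zero-free regions for $L$-functions. This requires weaving the sieve identity together with the exponential sum inputs in a manner that is uniform across all the ranges of $t$, $u$, $N$ and $q$ appearing after the bilinear decomposition, and then reassembling the pieces dyadically while keeping the total loss compatible with the $(\log x)^{3/5}(\log\log x)^{-1/5}$ savings. Carrying out this bookkeeping in a purely real-variable/Fourier framework is the technical core of the argument.
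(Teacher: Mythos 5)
Your proposal has the right cast of characters --- orthogonality of characters, Korobov--Vinogradov exponential sums for moderately large $t$, and a Siegel-type lower bound for small $t$ --- but the engine you propose to run them through cannot produce the claimed error term, and the paper's actual mechanism is quite different.

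The fatal issue is the appeal to a Hal\'asz-type inequality. In the bound you quote,
\[
\frac1x\Bigl|\sum_{n\le x}f(n)\Bigr|\ll (1+M)e^{-M}+\frac1{\sqrt T}+\frac{\log\log x}{\log x},
\]
the quantity $M=\min_{|t|\le T}\sum_{p\le x}(1-\Re f(p)p^{-it})/p$ is bounded above by $2\sum_{p\le x}1/p\asymp\log\log x$, so $(1+M)e^{-M}$ can never be smaller than a fixed power of $1/\log x$; and the $\frac{\log\log x}{\log x}$ term is there regardless. The paper makes exactly this point in the introduction: Hal\'asz's theorem intrinsically cannot give an error term better than $x\log\log x/\log x$, and this quantitative ceiling is the ``stumbling block'' that earlier pretentious-style proofs (e.g.\ Granville--Soundararajan's \eqref{gs}) ran into. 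So a direct application of Hal\'asz to $f=\mu\chi$, no matter how carefully you lower-bound $M$, is categorically too weak to reach $x\exp(-c(\log x)^{3/5}(\log\log x)^{-1/5})$. The ``technical core'' you identify in your last paragraph --- weaving exponential-sum savings through a bilinear decomposition --- would at best sharpen constants inside a framework that has already lost the game.

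What the paper actually does is closer to Iwaniec--Kowalski's proof of \eqref{pnt ik} than to Hal\'asz. It never bounds $\sum_{n\le x}\mu(n)\chi(n)$ at all; instead, it sets $\Lambda_\chi=\chi\Lambda-\delta(\chi)$, forms $F_\chi(s)=-\frac{L'}{L}(s,\chi)-\delta(\chi)\zeta(s)$, and bounds the $(k-1)$-th derivative $F_\chi^{(k-1)}(s)$ on the \emph{right} of the $1$-line, i.e.\ at $\Re s=1+1/\log y$, for every $k$. The derivative bounds are the place where all the analytic inputs get combined: Lemma~\ref{derlemma} (the Fa\`a di Bruno-type identity for $(\log F)^{(k-1)}$, which is the non-obvious combinatorial piece --- not a Vaughan/Heath--Brown decomposition), Lemma~\ref{fund} (the fundamental lemma of sieve theory, used with van der Corput and Korobov--Vinogradov exponential sums in Lemma~\ref{chinit} to control $L_y^{(j)}(s,\chi)$), and the pretentious distance machinery of Section~\ref{dist} (used only to lower-bound $|L_y(s,\chi)|$, where the Siegel input enters via $L_q(1,\chi)$). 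One then writes a Perron-type integral on $\Re s=1+1/\log y$,
\[
\sum_{n\le y}\Lambda_\chi(n)(\log n)^{k-1}\log\frac yn
   =\frac{(-1)^{k-1}}{2\pi i}\int_{\Re s=1+1/\log y}F_\chi^{(k-1)}(s)\,\frac{y^s}{s^2}\,ds,
\]
and, after inserting the derivative bounds, \emph{optimizes over $k$}; that optimization is what produces the Korobov--Vinogradov-quality error term, without ever moving the contour into the critical strip or invoking a mean-value theorem for multiplicative functions. You would need to replace your Hal\'asz step by this high-derivative / optimize-over-$k$ strategy to have a viable proof.
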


Theorem~\ref{pntap} will be proven in Section~\ref{proofs} as a corollary of the more general Theorem~\ref{pntap2}. As in the classical proof of Theorem~\ref{pntap}, in order to treat real Dirichlet characters and pass from Theorem~\ref{pntap2} to Theorem~\ref{pntap}, we need Theorem~\ref{siegel} below, which is due to Siegel~\cite{Si}. Pintz~\cite{Pi73} gave an elementary proof of this theorem, and in Section~\ref{siegelproof} we give an even simpler proof of it.

\begin{theorem}\label{siegel} Let $\epsilon\in(0,1]$. For all real, non-principal Dirichlet characters $\chi$ modulo $q$ we have that $L(1,\chi)\gg_\epsilon q^{-\epsilon}$; the implied constant cannot be computed effectively.
\end{theorem}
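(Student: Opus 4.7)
\medskip

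\textbf{Proof plan.} My approach will be the classical two-case (``exceptional character'') argument originating with Landau and made famous by Siegel, streamlined where possible. Fix $\epsilon\in(0,1]$, which I may assume is small. The dichotomy is: either (Case I) every real non-principal character $\chi$ modulo any $q$ satisfies $L(1,\chi)\ge q^{-\epsilon/2}$, in which case Theorem~\ref{siegel} (with $\epsilon$ in place of $\epsilon/2$) is immediate (and with an effective constant); or (Case II) there exists a real primitive character $\chi_1$ modulo some $q_1$ with $L(1,\chi_1)<q_1^{-\epsilon/2}$. I will use this hypothetical $\chi_1$ to extract a \emph{lower} bound for $L(1,\chi)$ for all other real primitive $\chi$ modulo $q$; since I cannot rule out Case II and cannot bound $q_1$ in terms of $\epsilon$, the constant produced is ineffective, as advertised.

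In Case II, for any real primitive character $\chi$ modulo $q$ with $\chi\ne\chi_1$, I consider the auxiliary Dirichlet series
\[
F(s):=\zeta(s)L(s,\chi_1)L(s,\chi)L(s,\chi_1\chi)=\sum_{n\ge1}\frac{a_n}{n^s}.
\]
The key algebraic fact is that $a_n\ge0$ for every $n$ (and $a_1=1$): at each prime $p\nmid qq_1$ the local Euler factor of $F$ equals $\prod_{\eta\in\{1,\chi_1,\chi,\chi_1\chi\}}(1-\eta(p)p^{-s})^{-1}$, whose expansion in $p^{-s}$ has non-negative coefficients, since the four values $\eta(p)\in\{\pm1\}$ come in a multiplicatively closed set. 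The function $F(s)$ has a unique pole, at $s=1$, simple, with residue $\lambda:=L(1,\chi_1)L(1,\chi)L(1,\chi_1\chi)$.

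The analytic heart is then a positivity estimate. Choose a real $\sigma$ slightly less than $1$ (to be optimised, of the shape $\sigma=1-\epsilon/c$ for a suitable absolute $c$). Using positivity of $a_n$ together with an expansion of $F$ around $s=2$ (or, equivalently, Cauchy's formula applied to $(s-1)F(s)$ on a small disc surrounding $[\sigma,2]$ and avoiding the pole), I will derive an inequality of the form
\[
1\le a_1\le F(\sigma)+\frac{\lambda}{1-\sigma}\cdot C(q,q_1)^{1-\sigma},
\]
where $C(q,q_1)$ is a polynomial expression in $q$ and $q_1$. Combining this with the trivial convexity bounds $|L(\sigma,\chi)|,|L(\sigma,\chi_1\chi)|\ll(qq_1)^{(1-\sigma)/2}\log(qq_1)$ and the hypothesis $L(1,\chi_1)<q_1^{-\epsilon/2}$, I solve for $L(1,\chi)$ and optimise $\sigma$ to conclude $L(1,\chi)\gg_\epsilon q^{-\epsilon}$. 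The reduction from general real non-principal $\chi$ to primitive characters is standard, since $L(1,\chi)=L(1,\chi^*)\prod_{p\mid q}(1-\chi^*(p)/p)$ with $\chi^*$ the primitive character inducing $\chi$, and the product is $\gg 1/\log\log q$ by Mertens.

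The principal obstacle is the positivity estimate in the previous paragraph: the argument must exploit $a_n\ge0$ and the modest size of the residue $\lambda$ (forced to be small by the hypothesis $L(1,\chi_1)\ll q_1^{-\epsilon/2}$) simultaneously, while keeping the dependence on $q,q_1$ under control; this is where the paper's promised simplification over Pintz should enter. Everything else—factorisation of $F$, positivity of Euler coefficients, convexity bounds on $L(\sigma,\chi)$, passage from primitive to general characters—is routine.
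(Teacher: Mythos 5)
Your proposal is correct in outline: it is the classical Landau--Siegel--Estermann argument, built around the Dirichlet series $F(s)=\zeta(s)L(s,\chi_1)L(s,\chi)L(s,\chi_1\chi)$ with nonnegative coefficients, Taylor expansion about $s=2$ combined with Cauchy estimates on a small disc, and convexity bounds. You also correctly identify the source of the ineffectivity (the unknowable modulus $q_1$ of a hypothetical exceptional character). The only loose point is that the step you flag as ``the principal obstacle'' --- the precise positivity inequality --- is left as a sketch, and you would need to handle the ramified primes in the Euler product argument (the nonnegativity of the coefficients holds at all primes, not just those coprime to $qq_1$, but one must check the degenerate cases $p\mid qq_1$ separately).

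The paper, however, takes a genuinely different and more elementary route. It first proves the stronger Tatuzawa-type Theorem~\ref{siegel2} (with at most one exception, $L(1,\chi)\gg\epsilon q^{-\epsilon}$ with an \emph{effective} constant) and only afterwards deduces Theorem~\ref{siegel} as a corollary, using Dirichlet's class number formula to handle the single possible exception. The engine of the proof is Lemma~\ref{zerol1}, which concerns a multiplicative $f$ with $0\le(1*f)(n)\le\tau_r(n)$ and $\sum_{n\le x}f(n)\ll x^{4/5}\log x$ (the paper applies this with $f=\chi_1*\chi_2*\chi_1\chi_2$, so that $1*f$ plays the role of your $a_n$), and proves two things: (a) if $L(1-\eta,f)\ge0$ then $L(1,f)\gg\eta/Q^{2\eta}$, and (b) $L(\sigma,f)$ has at most one zero in a short interval to the left of $\sigma=1$. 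The proof of both parts works directly with truncated sums $\sum_{n\le x}(1*f)(n)n^{\eta-1}$ and $\sum_{n\le x}(1*f)(n)(\log n)n^{\eta-1}$ via the hyperbola method and partial summation --- no analytic continuation of $L$-functions, no Cauchy's formula, no contour integration. The bound on partial sums guarantees $L(s,f)$ converges conditionally for $\Re(s)>4/5$, so everything is a statement about ordinary real Dirichlet series. What the paper's route buys: it stays within the ``elementary'' framework that is the whole point of the paper, and it delivers the sharper Tatuzawa statement (effective outside one exception) for free. What your route buys: it is the more familiar argument and arguably shorter to write out in full, but it imports exactly the kind of complex-analytic machinery (analytic continuation past $\Re(s)=1$, Cauchy estimates) the paper is trying to avoid.
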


Lying just underneath the surface of the proof of Theorem~\ref{pntap} there is a general idea about multiplicative functions, which partly goes back to Hal\'asz~\cite{hal2,hal3}. Let $f:\SN\to\{z\in\SC:|z|\le1\}$ be a multiplicative function, namely $f$ satisfies the functional equation $$
f(mn)=f(m)f(n)\quad\text{whenever}\quad(m,n)=1.
$$
We want to understand when $f$ is small on average, that is to say, when
\be\label{small}
\sum_{n\le x}f(n)=o(x)\quad(x\to\infty).
\ee
First, note that if $f(n)=n^{it}$ for some fixed $t\in\SR$, then~\eqref{small} does not hold. Also, if we tweak $n^{it}$ a little bit, we find more counterexamples to~\eqref{small}. Hal\'asz showed that these are the only counterexamples: unless $f$ \textit{pretends to be} $n^{it}$ for some $t\in\SR$, in the sense that
$$
\sum_p\frac{1-\Re(f(p)p^{-it})}p<\infty,
$$
then relation~\eqref{small} holds.

Letting $f=\mu$ in Hal\'asz's theorem, we find that the prime number theorem is reduced to the statement that $\mu$ does not pretend to be $n^{it}$ for any $t$. Recently, Granville and Soundararajan~\cite{gs} carried out this argument and obtained a new proof of the prime number theorem. The possibility that $\mu$ pretends to be $n^{it}$ was excluded using Selberg's sieve. From a broad point of view, this proof can be also regarded elementary, albeit not completely elementary, since implicit in the proof of Hal\'asz's theorem is Plancherel's formula from Harmonic Analysis. Using a quantitative version of Hal\'asz's theorem, due to various authors~\cite{mont,ten,gs1}, this proof produces the estimate
\be\label{gs}
\sum_{n\le x}\mu(n)\ll\frac x{(\log x)^{1-2/\pi+o(1)}}\quad(x\to\infty).
\ee
The weak error term in~\eqref{gs} is intrinsic to the method of Graville and Soundararajan. The generality of Hal\'asz's theorem is simultaneously its Achilles' heel: it can never yield an error term that is better than $x\log\log x/\log x$ (\cite{mv2,gs}).

These quantitative limitations of Hal\'asz's theorem were the stumbling block to obtaining good bounds on the error term in the prime number theorem using general tools about multiplicative functions. In~\cite{kou} we develop further the methods of this paper and show how to obtain an improvement over Hal\'asz's theorem for a certain class of functions $f$: we show that if
$$
\sum_{\substack{n\le x \\ (n,2)=1}} f(n)\ll_A\frac x{(\log x)^A}\quad(x\ge2)
$$
for some $A\ge6$, then we have sharper bounds for the partial sums of $\mu\cdot f$ than what Hal\'asz's theorem gives us. In particular, if $f$ does not pretend to be $\mu(n)n^{it}$ for some $t\in\SR$, then we can show a lot of cancelation in the partial sums of $\mu\cdot f$. The key observation for the application to primes in arithmetic progressions is that a non-principal character $\chi$ is very small on average. So using our methods we may show that $\mu\cdot\chi$ is also very small on average, provided that $\chi$ does not pretend to be $\mu(n)n^{it}$ for some $t\in\SR$, which, in more classical terms, corresponds to a suitable zero-free region for the $L$-function $L(s,\chi)=\sum_{n\ge1}\chi(n)/n^s$ around the point $1+it$.

%%%%%%%%%%%%%%%%%%%%%%%%%%%%%%%%%%%%%%%%%%%%%%%%%%%%%%%%%%%%%%%%%%%%%%%%%%%%%%%%%%%%%%%%%%%%%%%%%%%%%%%%%%%%%%%%%%%%%%%%%%%%
%%%%%%%%%%%%%%%%%%%%%%%%%%%%%%%%%%%%%%%%%%%%%%%%%%%%%%%%%%%%%%%%%%%%%%%%%%%%%%%%%%%%%%%%%%%%%%%%%%%%%%%%%%%%%%%%%%%%%%%%%%%%
%%%%%%%%%%%%%%%%%%%%%%%%%%%%%%%%%%%%%%%%%%%%%%%%%%%%%%%%%%%%%%%%%%%%%%%%%%%%%%%%%%%%%%%%%%%%%%%%%%%%%%%%%%%%%%%%%%%%%%%%%%%%
%%%%%%%%%%%%%%%%%%%%%%%%%%%%%%%%%%%%%%%%%%%%%%%%%%%%%%%%%%%%%%%%%%%%%%%%%%%%%%%%%%%%%%%%%%%%%%%%%%%%%%%%%%%%%%%%%%%%%%%%%%%%
%%%%%%%%%%%%%%%%%%%%%%%%%%%%%%%%%%%%%%%%%%%%%%%%%%%%%%%%%%%%%%%%%%%%%%%%%%%%%%%%%%%%%%%%%%%%%%%%%%%%%%%%%%%%%%%%%%%%%%%%%%%%

\section{Preliminaries}

\subsection*{Notation} For an integer $n$ we denote with $P^+(n)$ and $P^-(n)$ the greatest and smallest prime divisors of $n$, respectively, with the notational convention that $P^+(1)=1$ and $P^-(1)=\infty$, and we let $\tau_r(n)=\sum_{d_1\cdots d_r=n}1$. Given two arithmetics functions $f,g:\SN\to\SC$, we write $f*g$ for their Dirichlet convolution, defined by $(f*g)(n)=\sum_{ab=n}f(a)g(b)$. The notation $F\ll_{a,b,\dots}G$ means that $|F|\le CG$, where $C$ is a constant that depends at most on the subscripts $a,b,\dots$, and $F\asymp_{a,b,\dots} G$ means that $F\ll_{a,b,\dots}G$ and $G\ll_{a,b,\dots}F$. Finally, we use the letter $c$ to denote a constant, not necessarily the same one in every place, and possibly depending on certain parameters that will be specified using subscripts and other means.

\medskip

%%%%%%%%%%%%%%%%%%%%%%%%%%%%%%%%%%%%%%%%%%%%%%%%%%%%%%%%%%%%%%%%%%%%%%%%%%%%%%%%%%%%%%%%%%%%%%%%%%%%%%%%%%%%%%%%%%%%%%%%%%%%
%%%%%%%%%%%%%%%%%%%%%%%%%%%%%%%%%%%%%%%%%%%%%%%%%%%%%%%%%%%%%%%%%%%%%%%%%%%%%%%%%%%%%%%%%%%%%%%%%%%%%%%%%%%%%%%%%%%%%%%%%%%%
%%%%%%%%%%%%%%%%%%%%%%%%%%%%%%%%%%%%%%%%%%%%%%%%%%%%%%%%%%%%%%%%%%%%%%%%%%%%%%%%%%%%%%%%%%%%%%%%%%%%%%%%%%%%%%%%%%%%%%%%%%%%
%%%%%%%%%%%%%%%%%%%%%%%%%%%%%%%%%%%%%%%%%%%%%%%%%%%%%%%%%%%%%%%%%%%%%%%%%%%%%%%%%%%%%%%%%%%%%%%%%%%%%%%%%%%%%%%%%%%%%%%%%%%%
%%%%%%%%%%%%%%%%%%%%%%%%%%%%%%%%%%%%%%%%%%%%%%%%%%%%%%%%%%%%%%%%%%%%%%%%%%%%%%%%%%%%%%%%%%%%%%%%%%%%%%%%%%%%%%%%%%%%%%%%%%%%

In this section we present a series of auxiliary results we will need later. We start with the following lemma, which is based on an idea in~\cite[p. 40]{ik}.

\begin{lemma}\label{derlemma} Let $M\ge1$, $D$ be an open subset of $\SC$ and $s\in D$. Consider a function $F:D\to\SC$ that is differentiable $k$ times at $s$ and its derivatives satisfy the bound $|F^{(j)}(s)|\le j!M^j$ for $1\le j\le k$. If $F(s)\neq0$, then $$\left|\left(\frac{F'}{F}\right)^{(k-1)}(s)\right|\le \frac{k!}2\left(\frac{2M}{\min\{|F(s)|,1\}}\right)^k.$$
\end{lemma}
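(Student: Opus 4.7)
I would argue by induction on $k$, writing $\delta := \min\{|F(s)|,1\}$ throughout. The base case $k=1$ is immediate: $|F'(s)/F(s)| \leq M/|F(s)| \leq M/\delta$, which matches the stated bound $\tfrac{1!}{2}(2M/\delta) = M/\delta$ exactly.

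For the inductive step, the natural identity to exploit is Leibniz's rule applied to the tautology $F' = (F'/F)\cdot F$. Differentiating $k-1$ times at $s$ and isolating the $(F'/F)^{(k-1)}$ term yields
\begin{equation*}
\Bigl(\frac{F'}{F}\Bigr)^{(k-1)}(s)\,F(s) \;=\; F^{(k)}(s) \;-\; \sum_{j=0}^{k-2}\binom{k-1}{j}\Bigl(\frac{F'}{F}\Bigr)^{(j)}(s)\,F^{(k-1-j)}(s).
\end{equation*}
Applying the inductive hypothesis to each $(F'/F)^{(j)}(s)$ for $j\leq k-2$, together with the hypothesis $|F^{(i)}(s)|\leq i!M^i$, and simplifying the coefficients via the algebraic identity $\binom{k-1}{j}(j+1)!(k-1-j)! = (k-1)!(j+1)$, I expect the absolute-value estimate to collapse to
\begin{equation*}
\Bigl|\Bigl(\frac{F'}{F}\Bigr)^{(k-1)}(s)\Bigr|\cdot |F(s)| \;\leq\; (k-1)!\,M^k\Bigl[\,k \;+\; \tfrac{1}{2}\sum_{i=1}^{k-1} i\,(2/\delta)^{i}\,\Bigr].
\end{equation*}

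Next I would divide by $|F(s)|\geq\delta$ and compare against the target $\tfrac{k!}{2}(2M/\delta)^{k}$. Writing $y := 2/\delta \geq 2$, this reduces the claim to the scalar inequality $k + \tfrac{1}{2}\sum_{i=1}^{k-1} i\,y^{i} \leq k\,y^{k-1}$. The sum is handled by a crude geometric estimate: $\sum_{i=1}^{k-1} i y^{i} \leq (k-1)\sum_{i=1}^{k-1} y^{i} \leq (k-1)\,y^{k}/(y-1) \leq 2(k-1)\,y^{k-1}$ for $y\geq 2$. Substituting, it suffices to verify $k \leq y^{k-1}$, which is clear from $y\geq 2$ and $k\geq 1$.

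The main obstacle will be the constant bookkeeping: the factor $\tfrac{1}{2}$ in the conclusion is tight, so the algebraic simplification via the binomial identity and the geometric estimate must fit together with no waste in order to propagate the factor $\tfrac{1}{2}$ through the induction; the edge cases $k=1$ and $y=2$ (i.e.\ $\delta=1$) turn out to be saturated, leaving essentially no room to spare.
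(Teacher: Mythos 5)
Your proof is correct and takes a genuinely different route from the paper's. The paper expands $\left(\tfrac{F'}{F}\right)^{(k-1)}$ via the Fa\`a di Bruno formula as a sum over partitions $a_1+2a_2+\cdots=k$, bounds each factor using the hypotheses, and reduces everything to the identity $\sum\binom{a_1+\cdots+a_k}{a_1,\dots,a_k}=2^{k-1}$, which it proves via a bijection between ordered partitions of $k$ and subsets of $\{1,\dots,k\}$ containing $1$. You bypass that combinatorial step entirely: inducting on $k$ via the Leibniz expansion of $F^{(k)}=\bigl((F'/F)\cdot F\bigr)^{(k-1)}$, the multinomial bookkeeping collapses --- the simplification $\binom{k-1}{j}(j+1)!(k-1-j)!=(k-1)!(j+1)$ holds exactly as you predict, and the intermediate bound $(k-1)!\,M^k\bigl[k+\tfrac12\sum_{i=1}^{k-1}iy^i\bigr]$ does come out cleanly --- to the scalar inequality $k+\tfrac12\sum_{i=1}^{k-1}iy^i\le ky^{k-1}$ with $y=2/\min\{|F(s)|,1\}\ge2$, which your geometric-series estimate (together with $2^{k-1}\ge k$) settles. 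Both routes yield the same constant and, as you note, both are saturated at $k=2$, $\delta=1$. Your version is arguably more self-contained (no Fa\`a di Bruno, no partition counting) and makes it transparent that the whole lemma hinges on having $y\ge2$, i.e.\ on $M\ge1$ and $\delta\le1$; the paper's version makes the combinatorial source of the constant $2^{k-1}$ more visible. Either is a legitimate proof.
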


\begin{proof} We have the identity
\bes\label{id3}
\left(\frac{-F'}{F}\right)^{(k-1)}(s)=k!\sum_{a_1+2a_2+\cdots=k}\frac{(-1+a_1+a_2+\cdots)!}{a_1!a_2!\cdots}
\left(\frac{-F'}{1!F}(s)\right)^{a_1}\left(\frac{-F''}{2!F}(s)\right)^{a_2}\cdots,
\ees
which can be easily verified by induction\footnote{A similar identity for the derivatives of $1/F$ was used by Iwaniec and Kowalski~\cite[p. 40]{ik} in their proof of~\eqref{pnt ik}.}. In order to complete the proof of the lemma, we will show that
\be\label{pr e0}
\sum_{a_1+2a_2+\cdots+ka_k=k}\frac{(a_1+a_2+\cdots+a_k)!}{a_1!a_2!\cdots a_k!}=\sum_{a_1+2a_2+\cdots+ka_k=k}\binom{a_1+a_2+\cdots+a_k}{a_1,a_2,\dots,a_k}=2^{k-1}.
\ee
Indeed, for each fixed $k$-tuple $(a_1,\dots,a_k)\in(\SN\cup\{0\})^k$ with $a_1+2a_2+\cdots+ka_k=k$, the multinomial coefficient $\binom{a_1+a_2+\cdots+a_k}{a_1,a_2,\dots,a_k}$ represents the way of writing $k$ as the sum of $a_1$ ones, $a_2$ twos, and so on, with the order of the different summands being important, e.g. if $k=5$, $a_1=1$, $a_2=2$ and $a_3=a_4=a_5=0$, then there are three such ways to write 5: $5=2+2+1=2+1+2=1+2+2$. So we conclude that
\bes
\sum_{a_1+2a_2+\cdots+ka_k=k}\binom{a_1+a_2+\cdots+a_k}{a_1,a_2,\dots,a_k}=\#\{\text{ordered partitions of}\ k\},
\ees
where we define an ordered partition of $k$ to be a way to write $k$ as the sum of positive integers, with the order of the different summands being important. To every ordered partition of $k=b_1+\cdots+b_m$, we can associate a unique subset of $\{1,\dots,k\}$ in the following way: consider the set $B\subset\{1,\dots,k\}$ which contains $\{1,\dots,b_1\}$, does not contain $\{b_1+1,\dots,b_1+b_2\}$, contains $\{b_1+b_2+1,\dots,b_1+b_2+b_3\}$, does not contain $\{b_1+b_2+b_3+1,\dots,b_1+b_2+b_3+b_4\}$, and so on. Then $B$ necessarily contains $1$ and, conversely, every subset of $\{1,\dots,k\}$ containing 1 can arise this way. So we conclude that there are $2^{k-1}$ ordered partitions, and~\eqref{pr e0} follows, thus completing the proof of the lemma.
\end{proof}

Below we state a result which is known as the {\it fundamental lemma of sieve methods}. It has appeared in the literature in many different forms (for example, see~\cite[Theorem 7.2]{halb}). The version we shall use is Lemma 5 in~\cite{fi}.

\begin{lemma}\label{fund} Let $y\ge2$ and $D=y^u$ with $u\ge2$. There exist two arithmetic functions $\lambda^\pm:\SN\to[-1,1]$ supported in $\{d\in\SN\cap[1,D]:P^+(d)\le y\}$ and such that
$$
\begin{cases}(\lambda^-*1)(n)=(\lambda^+*1)(n)=1&\text{if}~P^-(n)>y,\cr (\lambda^-*1)(n)\le0\le(\lambda^+*1)(n)&\text{otherwise}.\end{cases}
$$
Moreover, if $g:\SN\to[0,1]$ is a multiplicative function, and either $\lambda=\lambda^+$ or $\lambda=\lambda^-$, then
$$
\sum_d\frac{\lambda(d)g(d)}d=\left(1+O(e^{-u})\right)\prod_{p\le y}\left(1-\frac{g(p)}p\right).
$$
\end{lemma}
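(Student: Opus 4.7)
The plan is to construct $\lambda^\pm$ as appropriately truncated M\"obius functions of the form $\lambda^\pm(d)=\mu(d)\cdot\mathbf{1}[d\in\mathcal S^\pm]$, where $\mathcal S^\pm$ is a carefully chosen family of squarefree integers in $[1,D]$ with $P^+(d)\le y$. The classical realization is Brun's combinatorial sieve (possibly in its Hooley refinement); an alternative is the Rosser--Iwaniec $\beta$-sieve. Either produces the desired relative error $O(e^{-u})$.

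For concreteness I would sketch the Brun construction. Fix an even integer $K_+$ and an odd integer $K_-$, each comparable to $u$ (and at most $u$), and set
$$\lambda^\pm(d)=\mu(d)\cdot\mathbf{1}\bigl[d\text{ squarefree},\ P^+(d)\le y,\ \omega(d)\le K_\pm\bigr].$$
The support condition $d\le D=y^u$ is then automatic, since for such $d$ we have $d\le y^{\omega(d)}\le y^{K_\pm}$. For the sign conditions, set $m:=\omega(n_y)$ with $n_y=\prod_{p\mid n,\,p\le y}p$; a standard telescoping identity gives
$$(\lambda^\pm*1)(n)=\sum_{k=0}^{K_\pm}(-1)^k\binom{m}{k}=\begin{cases}1 & m=0,\\ (-1)^{K_\pm}\binom{m-1}{K_\pm} & m\ge 1.\end{cases}$$
This equals $1$ when $P^-(n)>y$, while otherwise it is $\ge 0$ when $K_+$ is even and $\le 0$ when $K_-$ is odd, as required.

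For the main-term sum, expanding by elementary symmetric functions of $\{g(p)/p\}_{p\le y}$ shows
$$\sum_d\frac{\lambda^\pm(d)g(d)}{d}=\sum_{k=0}^{K_\pm}(-1)^k e_k\!\left(\left\{\frac{g(p)}{p}\right\}_{p\le y}\right),$$
which is the partial sum of the expansion of $\prod_{p\le y}(1-g(p)/p)$. A crude tail bound is $\sum_{k>K_\pm}S^k/k!\ll(eS/K_\pm)^{K_\pm}$, where $S=\sum_{p\le y}g(p)/p\ll\log\log y$ by Mertens' theorem; choosing $K_\pm$ a sufficiently large constant multiple of $u$ makes this $\ll e^{-u}$.

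The main obstacle is that the bound just described is an \emph{absolute} error, whereas the lemma requires a \emph{relative} error against the product $\prod_{p\le y}(1-g(p)/p)$, which can be exponentially small in $\log y$. I would handle this via the Brun--Hooley refinement: partition $\{p\le y\}$ into dyadic blocks $(y^{1/2^{j+1}},y^{1/2^j}]$ and apply the Brun construction on each block separately, so that the errors multiply across blocks and are controlled relative to the full Euler product. Alternatively, the Rosser--Iwaniec $\beta$-sieve (with any fixed $\beta>1$) produces weights supported on $[1,D]$ achieving the same estimate directly through its adapted truncation condition on consecutive prime factors. Either route yields the claimed asymptotic.
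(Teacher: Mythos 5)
The paper does not prove this lemma; it is quoted verbatim as Lemma 5 of Friedlander--Iwaniec, \emph{On Bombieri's asymptotic sieve}, so there is no ``paper's proof'' to compare against. What you have written is a sketch of a known proof of the fundamental lemma, and it is worth assessing on its own terms.

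Your pure-Brun construction is correct as far as it goes: the support condition $d\le y^{K_\pm}\le y^u=D$ holds once $K_\pm\le u$, the identity $(\lambda^\pm*1)(n)=\sum_{k\le K_\pm}(-1)^k\binom{m}{k}=(-1)^{K_\pm}\binom{m-1}{K_\pm}$ gives the sign conditions with $K_+$ even, $K_-$ odd, and the expansion $\sum_d\lambda^\pm(d)g(d)/d=\sum_{k\le K_\pm}(-1)^ke_k$ is right. You also correctly diagnose why this does not finish the job: the tail bound $\sum_{k>K}e_k\ll(eS/K)^K$ with $S=\sum_{p\le y}g(p)/p$ is an absolute error, while the Euler product can be as small as $\asymp e^{-S}\asymp1/\log y$, and when $u$ is fixed (say $u=2$) but $y\to\infty$, no admissible choice of $K_\pm\le u$ makes $(eS/K)^Ke^S$ bounded, let alone $O(e^{-u})$.

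The gap is that the fix is only named, not carried out, and the fix \emph{is} the theorem. Whichever route you take (Brun--Hooley blocking or the Rosser--Iwaniec $\beta$-sieve), the substance lies in the bookkeeping you skip: in Brun--Hooley you must choose block cut-offs $y_j$ and truncation levels $K_j$ so that (i) the total support $\prod_j y_j^{K_j}$ stays below $D$, (ii) the product of the per-block relative errors is $O(e^{-u})$ \emph{uniformly} in $y$ and in $g:\SN\to[0,1]$, and (iii) the sign conditions survive the product construction (they do, because a product of one-sided sieves on disjoint prime blocks is again one-sided, but this needs to be said). None of this is hard, but it is exactly where the lemma earns its $e^{-u}$, and presenting the inadequate single-block construction in full while handwaving the multi-block one inverts the weight of the argument. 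If you want a self-contained proof to insert here, either execute the Brun--Hooley optimization (Friedlander--Iwaniec's \emph{Opera de Cribro}, Ch.~6, or Halberstam--Richert Thm.~7.2 both do this explicitly and could be followed line by line), or simply cite the source as the paper does.
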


In addition, we need estimates for the exponential sums $\sum_{N<n\le2N}(n+u)^{it}$ when $\log N\ll\log|t|$. The best such result that is known is due to Ford~\cite[Theorem 2]{Fo}, but its proof uses implicitly the zeroes of the Riemann zeta function (see Lemmas 2.1 and 2.2 in~\cite{Fo}). Instead, we use results from~\cite[Chapter 5]{titch} and~\cite[Chapter 2]{walfisz}\ that are due to van der Corput and Korobov-Vinogradov, respectively, and have completely elementary proofs.

\begin{lemma}\label{nit} For $t\in\SR$, $0\le u\le1$ and $2\le N\le t^2$ we have that 
$$
\sum_{N<n\le2N}(n+u)^{it}\ll N\cdot\exp\left\{-\frac{(\log N)^3}{66852(\log|t|)^2}\right\}.
$$
\end{lemma}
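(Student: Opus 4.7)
The plan is to reduce the sum to an exponential sum and apply the classical Vinogradov--Korobov $k$-th derivative estimate in the form available from elementary methods. Writing $(n+u)^{it} = e^{2\pi i f(n)}$ with $f(x) := (t/2\pi)\log(x+u)$, the $j$-th derivative on $(N,2N]$ is
$$f^{(j)}(x) = \frac{(-1)^{j-1}(j-1)!\,t}{2\pi(x+u)^j}, \qquad |f^{(j)}(x)| \asymp (j-1)!\,\lambda_j,$$
where $\lambda_j := |t|/(2\pi N^j)$. This is precisely the setting in which the van der Corput -- Vinogradov -- Korobov machinery for sums with smooth amplitudes produces non-trivial cancellation.

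With this derivative data in hand, I would invoke the $k$-th derivative estimate exactly as stated in Walfisz, Chapter 2 (or equivalently the chain of $B$-processes in Titchmarsh, Chapter 5, combined with Vinogradov's mean-value theorem for the system $x_1^\ell + \cdots + x_k^\ell = y_1^\ell + \cdots + y_k^\ell$, $1 \le \ell \le k$). The general output of that machinery is a bound of the shape
$$\sum_{N<n\le 2N} e^{2\pi i f(n)} \ll N \exp\!\left(-\,c\,\frac{(\log N)^3}{\log^2(\lambda_k N^k)}\right)$$
after optimizing over $k$. In our situation $\lambda_k N^k \asymp |t|$ regardless of $k$, so $\log(\lambda_k N^k) \asymp \log|t|$, and the bound collapses to the desired
$$N \exp\!\left(-c\,(\log N)^3/(\log|t|)^2\right).$$
The natural choice of the auxiliary integer $k$ in the Vinogradov iteration is $k \asymp \log|t|/\log N$, which stays $\ge 2$ thanks to the hypothesis $N \le t^2$.

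The only real work is quantitative: extracting the explicit constant $c = 1/66852$ requires bookkeeping every numerical factor through Vinogradov's mean-value inequality, through the choice of $k$, and through the Weyl -- van der Corput iteration. This is the main obstacle, in the sense that the chain of inequalities is long and each step contributes a small loss that has to be conservatively estimated, but it is purely a computation with no new ideas beyond the classical Korobov-Vinogradov theory. The hypotheses $N \ge 2$ and $N \le t^2$ are used exactly to ensure that $\log N > 0$ and that the optimal $k$ is a genuine integer $\ge 2$, so that the derivative estimates are meaningful; the rest follows by quoting the corresponding theorem in Walfisz and substituting the specific amplitude $f(x) = (t/2\pi)\log(x+u)$.
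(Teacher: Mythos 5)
Your proposal collapses two genuinely different tools into one, and the gap is that neither alone covers the full range $2\le N\le t^2$. The Korobov--Vinogradov machinery (Walfisz, Chapter~2, which is what delivers the exponential shape $N\exp\{-c(\log N)^3/(\log|t|)^2\}$ directly via Vinogradov's mean-value theorem) is effective only when $\log|t|/\log N$ is large, i.e.\ when $N$ is a fairly small power of $|t|$. As soon as $N$ is a bounded power of $|t|$ --- say $N\in(|t|^{1/18},|t|^2]$, which is allowed under your hypotheses --- the iteration depth $k$ you propose, $k\asymp\log|t|/\log N$, drops to a small bounded integer and the Walfisz-type bound degenerates; in particular the conversion from the floor-function form of the VK theorem to the clean $(\log N)^3/(\log|t|)^2$ form breaks down. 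That regime requires the separate van der Corput $k$-th derivative estimates (Titchmarsh, Theorems~5.9, 5.11, 5.13), which deliver a \emph{power} saving $N^{-1/(8K)}$ with $K=2^{k-1}$ for a bounded $k$; one then converts that power saving into the same exponential form only because $\log N\asymp\log|t|$ in that regime. These are not ``equivalent'' formulations of one theorem, as your parenthetical suggests --- they are complementary, and the paper explicitly splits the range at $N=|t|^{1/18}$ and applies each tool in its own window, choosing $k$ differently in each case ($k$ is unbounded and $\asymp\log|t|/\log N$ for VK; $k$ ranges over $\{2,\dots,19\}$ for van der Corput).

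Beyond that structural omission, the claim that the shape $N\exp\bigl(-c(\log N)^3/\log^2(\lambda_kN^k)\bigr)$ falls out ``after optimizing over $k$'' is not something the van der Corput chain of $B$-processes produces; it is specific to the Vinogradov--Korobov method, and simply does not hold in the regime where $k$ must stay bounded. So the proposal cannot be repaired by just ``bookkeeping constants''; you would need to add the second case and actually carry out the van der Corput computation (choose $m$ with $|t|^{2/(m+1)}<N\le|t|^{2/m}$, set $k=\lfloor m/2+7/4\rfloor$, verify the quantitative comparison $2^{k+2}/(k-7/4)^2\le 7050$ for $2\le k\le 19$, etc.) in order to obtain a uniform constant such as $1/66852$ over the whole range.
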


\begin{proof} When $N\le|t|^{1/18}$, the result follows by~\cite[Satz 2, p. 57]{walfisz}, since 
$$
\left\lfloor\frac{\log|t|}{\log N}\right\rfloor+1\le\frac{19}{18}\frac{\log|t|}{\log N}
$$ 
for such $N$ and $66852\ge(19/18)^260000$. Assume now that $|t|^{1/18}<N\le|t|^2$. Consider $m\in\{1,2,\dots,35\}$ such that $|t|^{\frac2{m+1}}<N\le |t|^{\frac2m}$. Let $k=\left\lfloor \frac m2+\frac74\right\rfloor\ge2$, so that $N^{k-7/4}\le|t|\le N^{k-1/4}$, and set $K=2^{k-1}\ge2$. Then we apply Theorem 5.9, 5.11 or 5.13 in~\cite[Section 5.9, p. 104-107]{titch} according to whether $k=2$, $k=3$ or $k\ge4$, respectively, to obtain the estimate
\bes\bsp
\frac1N\sum_{N<n\le2N}(n+u)^{it}\ll\left(\frac{|t|}{N^k}\right)^{\frac1{2K-2}}+\left(\frac{N^{k-2}}{|t|}\right)^{\frac1{2K-2}}\ll \frac1{N^{\frac1{8K}}}\le\exp\left\{-\frac{\left(k-\frac74\right)^2}{2^{k+2}}\frac{(\log N)^3}{(\log|t|)^2}\right\}.
\end{split}\ees
Since $2^{k+2}/(k-7/4)^2\le7050$ for $2\le k\le19$, the lemma follows in this last case too.
\end{proof}

Next, we show how to combine Lemmas~\ref{fund} and~\ref{nit} to estimate sums of $\chi(n)n^{it}$ when $n$ has no small prime factors. Here and for the rest of the paper we use the notation 
$$
\delta(\chi)
	=\begin{cases}
		1&\text{if}\ \chi\ \text{is a principal Dirichlet character},\cr 
		0&\text{otherwise},
	\end{cases}
$$
and 
$$
V_t=\exp\left\{(\log(3+|t|))^{2/3}(\log\log(3+|t|))^{1/3}\right\}\quad(t\in\SR).
$$

\begin{lemma}\label{chinit} Let $\chi$ be a Dirichlet character modulo $q$ and $t\in\SR$. For $x\ge y\ge 2$ with $x\ge \max\{q^4,V_t^{100}\}$ we have that 
\bes\bsp\sum_{\substack{n\le x\\P^-(n)>y}}\chi(n)n^{it}
	&=\frac{\delta(\chi)\phi(q)}q\frac{x^{1+it}}{1+it}\prod_{\substack{p\le y\\p\nmid q}}\left(1-\frac1p\right)	
	+O\left(\frac{x^{1-1/(30\log y)} + x^{1-1/(100\log V_t)} }{\log y}\right).
\end{split}\ees
\end{lemma}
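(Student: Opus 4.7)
My approach is to apply the fundamental sieve lemma (Lemma~\ref{fund}) to remove the condition $P^-(n)>y$, interchange summation to isolate inner sums $T(Y):=\sum_{m\le Y}\chi(m)m^{it}$, and estimate each $T(Y)$ via Lemma~\ref{nit} on dyadic blocks. Set $u:=(\log x)/(30\log y)$ and $D:=y^u=x^{1/30}$; I may assume $u\ge 2$, as otherwise $y$ is so close to $x$ that the claim is a trivial consequence of Mertens' theorem. Writing $\mathbf{1}_{P^-(n)>y}=(\lambda^+*1)(n)-E(n)$ with $0\le E(n)\le((\lambda^+-\lambda^-)*1)(n)$, the $E$-contribution to $S$ is bounded by $\sum_{n\le x}((\lambda^+-\lambda^-)*1)(n)\le x\sum_d(\lambda^+-\lambda^-)(d)/d+O(D)$, which by Lemma~\ref{fund} with $g\equiv 1$ is $\ll xe^{-u}/\log y+D\ll x^{1-1/(30\log y)}/\log y$, matching the first stated error. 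Using complete multiplicativity of $\chi(n)n^{it}$,
$$M:=\sum_{n\le x}(\lambda^+*1)(n)\chi(n)n^{it}=\sum_{\substack{d\le D\\ P^+(d)\le y}}\lambda^+(d)\chi(d)d^{it}\,T(x/d),$$
where the arguments $x/d$ lie in $[x^{29/30},x]$.

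The central step is the uniform asymptotic
$$T(Y)=\delta(\chi)\frac{\phi(q)}{q}\frac{Y^{1+it}}{1+it}+O\!\left(Y\exp\!\left(-\frac{\log Y}{100\log V_t}\right)\right)\qquad(Y\in[x^{29/30},x]).$$
To prove it, split according to the residue class $a\pmod q$ with $(a,q)=1$, and substitute $m=a+kq$ to convert the inner sum to $q^{it}\sum_k(k+a/q)^{it}$. Lemma~\ref{nit} (with shift $a/q\in[0,1)$) handles the dyadic blocks of length $N\le|t|^2$; for blocks with $N>|t|^2$ (if any), or the whole sum when $|t|\le 3$, I would apply Euler--Maclaurin, which both yields an adequate error and produces the main term $Y^{1+it}/(1+it)$ via $\int u^{it}\,du$. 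Summing $\sum_{(a,q)=1}\chi(a)=\delta(\chi)\phi(q)$ then leaves the stated main term only when $\chi=\chi_0$. The constant $100$ in the hypothesis $x\ge V_t^{100}$ is calibrated so that, using the identity $(\log V_t)^3=(\log(3+|t|))^2\log\log(3+|t|)$, the Lemma~\ref{nit} savings $(\log Y)^3/(66852(\log|t|)^2)$ dominate the target $(\log Y)/(100\log V_t)$ once $Y\ge V_t^{96}$.

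Substituting this asymptotic into $M$, the factors $d^{it}\cdot(x/d)^{1+it}$ collapse to $x^{1+it}/d$ on the support $(d,q)=1$ (where $\chi_0(d)=1$), producing a main term $\delta(\chi)(\phi(q)/q)(x^{1+it}/(1+it))\sum\lambda^+(d)/d$ over $d\le D$, $P^+(d)\le y$, $(d,q)=1$. A final application of Lemma~\ref{fund}, this time to the completely multiplicative function $g$ defined by $g(p)=\mathbf{1}_{p\nmid q}$, evaluates the last sum as $(1+O(e^{-u}))\prod_{p\le y,\,p\nmid q}(1-1/p)$, yielding the stated main term. The contribution of the $T$-error summed over $d\le D$ (using $\sum_{d\le D}1/d\ll\log D$) is $\ll x^{1-1/(100\log V_t)}\log x$, absorbable into the second stated error term after a mild adjustment of constants. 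The principal obstacle is establishing the $T(Y)$ asymptotic uniformly in $|t|$: since Lemma~\ref{nit} only applies for dyadic block lengths $N\le|t|^2$, one must supplement it with Euler--Maclaurin on any blocks with $N>|t|^2$, handle the case $|t|\le 3$ by pure Euler--Maclaurin, and carefully merge the various estimates into the single clean bound above.
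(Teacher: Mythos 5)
Your overall strategy is the same as the paper's (sieve with $\lambda^{\pm}$ to open the $P^-(n)>y$ condition, then bound the inner sums $T(Y)=\sum_{m\le Y}\chi(m)m^{it}$ via the van der Corput/Korobov--Vinogradov Lemma~\ref{nit} on dyadic blocks, with Euler--Maclaurin producing the main term). The one organizational difference is that you aim for a single uniform asymptotic for $T(Y)$ by mixing the two tools blockwise, whereas the paper makes a clean upfront case split: if $x>qt^2/c$ it runs Euler--Maclaurin over the whole range (main term plus $O(q(1+|t|)\log x)$), and if $x\le qt^2/c$ then every dyadic block length is $\le t^2$, so Lemma~\ref{nit} applies throughout and no merging is needed. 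Your plan can be made to work but is strictly more delicate; the paper's split is worth noting as the cleaner route.

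The genuine gap is in your final error accounting. You write that after summing the $T$-error over $d$ you obtain $\ll x^{1-1/(100\log V_t)}\log x$, and you dismiss the discrepancy with the stated bound $x^{1-1/(100\log V_t)}/\log y$ as ``absorbable after a mild adjustment of constants.'' It is not. The shortfall is a factor $\log x\,\log y$ (or $(\log y)^{2}$ if you use the sharper $\sum_{P^+(d)\le y}1/d\ll\log y$), and no finite shift of the constant $100\to c$ absorbs it: under the standing hypothesis $x\ge V_t^{100}$ the gain $x^{(c-100)/(100c\log V_t)}$ can be as small as $e^{(c-100)/c}=O(1)$ while $\log x\,\log y$ is unbounded (take $\log x\asymp\log V_t$ with $|t|\to\infty$). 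The paper handles precisely this point by \emph{not} downgrading the cubic saving $(\log x)^3/(\log|t|)^2$ to the linear target at the level of $T(Y)$; instead it carries the cubic form through and only at the very end writes $\frac{(\log x)^3}{185000(\log|t|)^2}=\frac{(\log x)^3}{370000(\log|t|)^2}+\frac{(\log x)^3}{370000(\log|t|)^2}\ge\bigl(\frac{\log x}{100\log V_t}\bigr)^3+2\log\log x$, so that the second half produces a factor $(\log x)^{-2}$ which eats the $\log y$ losses. Your intermediate step $T(Y)\ll Y\exp(-\log Y/(100\log V_t))$ discards exactly the surplus needed for this, so the bound you end up with is off by a genuinely unbounded factor. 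The fix is simple once you see it: keep the cubic exponent when summing over $d$ and only then convert, following the paper's decomposition.

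A secondary point: the claim that the cubic savings dominate $\log Y/(100\log V_t)$ as soon as $Y\ge V_t^{96}$ is tight near $|t|\approx3$ (where $\log\log(3+|t|)$ is small), and is further eroded by the passage from $\log Y$ to $\log(Y/q)$ in the residue-class reduction. The paper's choice to make the Euler--Maclaurin regime cover all $|t|\le c^{1/2}$ for a \emph{large} constant $c$ (rather than $|t|\le3$) is what restores the slack; your threshold of $3$ is too low, and the crossover between the two tools is precisely where your constants become problematic.
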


\begin{proof} We apply Lemma~\ref{fund} with $D=(x/q)^{1/20}\ge x^{3/80}$. We have that
\be\label{pre1}\bsp
\sum_{\substack{n\le x\\P^-(n)>y}}\chi(n)n^{it}&=\sum_{n\le x}(\lambda^+*1)(n)\chi(n)n^{it}+O\left(\sum_{n\le x}(\lambda^+*1-\lambda^-*1)(n)\right)\\
&=\sum_d\lambda^+(d)\chi(d)d^{it}\sum_{m\le x/d}\chi(m)m^{it}+O\left(\sum_d(\lambda^+(d)-\lambda^-(d))\left\lfloor\frac xd\right\rfloor\right).
\end{split}\ee
For the error term, note that \be\label{pre3}\sum_d(\lambda^+(d)-\lambda^-(d))\left\lfloor\frac xd\right\rfloor=x\sum_d\frac{\lambda^+(d)-\lambda^-(d)}d+O\left((x/q)^{1/20}\right)\ll\frac{x^{1-3/(80\log y)}}{\log y},\ee by Lemma~\ref{fund}. To estimate the main term of the right hand side of~\eqref{pre1}, we distinguish two cases.

First, assume that $x>qt^2/c$, where $c$ is some large constant to be chosen later. Then
\bes\label{pre2}\bsp
\sum_{m\le x/d}\chi(m)m^{it} =\int_{1^-}^{x/d}u^{it}d\left(\frac{\delta(\chi)\phi(q)}qu+O(q)\right) &=\frac{\delta(\chi)\phi(q)}q\frac{(x/d)^{1+it}}{1+it}+O(q(1+|t|)\log x)\\
&=\frac{\delta(\chi)\phi(q)}q\frac{(x/d)^{1+it}}{1+it}+O_c(x^{5/8}\log x),
\end{split}\ees
since $q(|t|+1)\ll_c\sqrt{xq}\le x^{5/8}$. Inserting the above estimate and~\eqref{pre3} into~\eqref{pre1} yields that
\be\label{pre10}\bsp
\sum_{\substack{n\le x\\P^-(n)>y}}\chi(n)n^{it} &= \frac{\delta(\chi)\phi(q)}{q} \frac{x^{1+it}}{1+it} \sum_d\frac{\lambda^+(d)\chi(d)}{d}
+  O_c\left( x^{5/8+1/20}\log x+ \frac{ x^{1-3/(80\log y)} } {\log y} \right) .
\end{split}\ee
If $\delta(\chi)=0$, the first term on the right hand side of~\eqref{pre10} vanishes trivially. Otherwise, $\chi$ is principal and, consequently,
\bes\bsp
\sum_d\frac{\lambda^+(d)\chi(d)}{d} = \sum_{(d,q)=1}\frac{\lambda^+(d)}d
&= \left(1+O\left(x^{-3/(80\log y)}\right)\right) \prod_{\substack{p\le y\\p\nmid q}}\left(1-\frac1p\right) \\
& = \prod_{ \substack{p\le y\\p\nmid q} }\left(1-\frac1p\right)+O\left( \frac{q}{\phi(q)} \frac{x^{-3/(80\log y)} } {\log y} \right),
\end{split}\ees
by Lemma~\ref{fund}. In any case, we have that
\bes
\sum_{\substack{n\le x\\P^-(n)>y}}\chi(n)n^{it} = \frac{\delta(\chi)\phi(q)}{q} \frac{x^{1+it}}{1+it} \prod_{ \substack{p\le y\\p\nmid q} }\left(1-\frac1p\right)
+  O_c\left(\frac{ x^{1-3/(80\log y)} } {\log y} \right),
\ees
which completes the proof in this case.

Finally, assume that $x\le qt^2/c$. In this case we must have that $|t|\ge\sqrt{c}$, since $x\ge q^4$. Moreover, observe that, for $2\le N\le t^2$ and $u\in[0,1]$, Lemma~\ref{nit} implies that
\begin{align*}
\sum_{n\le N-u}(n+u)^{it} &= O(\sqrt{N})+\sum_{1\le2^j\le\sqrt{N}} \sum_{\frac{N-u}{2^{j+1}}<n\le\frac{N-u}{2^j} }(n+u)^{it} \\
&\ll \sqrt{N} +\sum_{1\le2^j\le\sqrt{N}} \frac{N}{2^j} \cdot \exp\left\{ -  \frac{(\log(N-u)-(j+1)\log 2)^3}{66852(\log|t|)^2} \right\} \\
&\ll \sqrt{N} + N \cdot \exp\left\{ -  \frac{\left(\log\frac{N-u}2\right)^3}{66852(\log|t|)^2} \right\} \ll N \cdot \exp\left\{ -  \frac{(\log N)^3}{66852(\log|t|)^2} \right\}
\end{align*}
because the summands on the second line of the above relation decay exponentially in $j$ by our assumption that $N\le t^2$. So for $d\le(x/q)^{1/20}$ we have that
\bes\bsp
\sum_{n\le x/d}\chi(n)n^{it} &=\sum_{a=1}^{q}\chi(a)\sum_{\substack{n\le x/d\\n\equiv\,a\,(\text{mod}\ q)}}n^{it}
=\sum_{a=1}^q\chi(a)q^{it}\sum_{0\le j\le\frac{x/d-a}q}\left(j+\frac aq\right)^{it} \\
&\ll\sum_{a=1}^q \frac x{dq}\exp\left\{-\frac{\log^3(x/(dq))}{66852(\log|t|)^2}\right\} \le \frac xd \exp\left\{-\frac{(\log x)^3}{185000(\log |t|)^2}\right\},\end{split}\ees since $x/(dq)\ge(x/q)^{19/20}\ge x^{0.7125}$ for $d\le(x/q)^{20}$ and $q\le x^{1/4}$. Inserting this estimate and~\eqref{pre3} into~\eqref{pre1}, we deduce that
\be\label{pre4}
\sum_{\substack{n\le x\\P^-(n)>y}}\chi(n)n^{it}\ll x(\log y)\exp\left\{-\frac{(\log x)^3}{185000(\log |t|)^2}\right\} + \frac{x^{1-3/(80\log y)}}{\log y}.
\ee
Lastly, observe that our assumption that $x\ge V_t^{100}$ implies that
\begin{align*}
\frac{\log x}{(\log\log x)^{1/3}(\log |t|)^{2/3}}\ge\frac{100\log V_t}{(\log(100\log V_t))^{1/3}(\log |t|)^{2/3}} \sim100\left(\frac32\right)^{1/3},
\end{align*} as $|t|\to\infty$. Since $|t|\ge\sqrt{c}$, taking $c$ large enough we find that
$\log x\ge100(\log\log x)^{1/3}(\log|t|)^{2/3}$ and, consequently,
\[
\frac{ (\log x)^3} { 185000 (\log |t|)^2}
    = \frac{ (\log x)^3} { 370000 (\log |t|)^2}
        + \frac{ (\log x)^3} { 370000 (\log |t|)^2}
    \ge \left( \frac{ \log x }  { 100\log V_t }\right)^3     + 2\log\log x
\]
Inserting the above inequality into~\eqref{pre4} completes the proof of the lemma in this last case too (note that the main term in the statement of the lemma is smaller than the error term in this case, since $|t|\ge\sqrt{cx/q}\ge c^{1/2}x^{3/8}$ by assumption).
\end{proof}

%%%%%%%%%%%%%%%%%%%%%%%%%%%%%%%%%%%%%%%%%%%%%%%%%%%%%%%%%%%%%%%%%%%%%%%%%%%%%%%%%%%%%%%%%%%%%%%%%%%%%%%%%%%%%%%%%%%%%%%%%%%%
%%%%%%%%%%%%%%%%%%%%%%%%%%%%%%%%%%%%%%%%%%%%%%%%%%%%%%%%%%%%%%%%%%%%%%%%%%%%%%%%%%%%%%%%%%%%%%%%%%%%%%%%%%%%%%%%%%%%%%%%%%%%
%%%%%%%%%%%%%%%%%%%%%%%%%%%%%%%%%%%%%%%%%%%%%%%%%%%%%%%%%%%%%%%%%%%%%%%%%%%%%%%%%%%%%%%%%%%%%%%%%%%%%%%%%%%%%%%%%%%%%%%%%%%%
%%%%%%%%%%%%%%%%%%%%%%%%%%%%%%%%%%%%%%%%%%%%%%%%%%%%%%%%%%%%%%%%%%%%%%%%%%%%%%%%%%%%%%%%%%%%%%%%%%%%%%%%%%%%%%%%%%%%%%%%%%%%
%%%%%%%%%%%%%%%%%%%%%%%%%%%%%%%%%%%%%%%%%%%%%%%%%%%%%%%%%%%%%%%%%%%%%%%%%%%%%%%%%%%%%%%%%%%%%%%%%%%%%%%%%%%%%%%%%%%%%%%%%%%%

\section{Distances of multiplicative functions}\label{dist}

Given two multiplicative functions $f,g:\SN\to\{z\in\SC:|z|\le1\}$ and real numbers $x\ge y\ge1$, we set $$\SD(f,g;y,x)=\left(\sum_{y<p\le x}\frac{1-\Re(f(p)\overline{g(p)})}p\right)^{1/2}.$$ This quantity defines a certain measure of `distance' between $f$ and $g$. In particular, it satisfies the following triangle-like inequality~\cite[p. 207]{gs2}.

\begin{lemma}\label{triangle}Let $f,g:\SN\to\{z\in\SC:|z|\le1\}$ be multiplicative functions and $x\ge y\ge1$. Then 
$$
\SD(1,f;y,x)+\SD(1,g;y,x)\ge\SD(1,fg;y,x).
$$
\end{lemma}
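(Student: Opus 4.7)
The plan is to reduce the claim to a pointwise inequality on each prime, after which Cauchy--Schwarz closes the gap. To set notation, I would first observe that $\SD(1,h;y,x)^2=\sum_{y<p\le x}(1-\Re h(p))/p$ for any multiplicative $h$, since $\Re\overline{h(p)}=\Re h(p)$. The task is then to prove
\[
\left(\sum_{y<p\le x}\frac{1-\Re(f(p)g(p))}{p}\right)^{\!1/2}\le\left(\sum_{y<p\le x}\frac{1-\Re f(p)}{p}\right)^{\!1/2}+\left(\sum_{y<p\le x}\frac{1-\Re g(p)}{p}\right)^{\!1/2}.
\]

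The first and most delicate step is to establish the pointwise bound
\[
\sqrt{1-\Re(\alpha\beta)}\le\sqrt{1-\Re\alpha}+\sqrt{1-\Re\beta}\qquad\text{for all }\alpha,\beta\in\SC\text{ with }|\alpha|,|\beta|\le1.
\]
Since both sides are non-negative, the plan is to square and then use $\Re(\alpha\beta)=\Re\alpha\,\Re\beta-\Im\alpha\,\Im\beta$ to reduce the claim to
\[
\Im\alpha\cdot\Im\beta-(1-\Re\alpha)(1-\Re\beta)\le 2\sqrt{(1-\Re\alpha)(1-\Re\beta)}.
\]
The second term on the left is non-positive and may be discarded, so it suffices to show $|\Im\alpha\,\Im\beta|\le 2\sqrt{(1-\Re\alpha)(1-\Re\beta)}$. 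This will follow from the inequality $(\Im z)^2=|z|^2-(\Re z)^2\le 1-(\Re z)^2=(1-\Re z)(1+\Re z)\le 2(1-\Re z)$, valid whenever $|z|\le1$, applied to both $z=\alpha$ and $z=\beta$.

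The second step is routine. Applying the pointwise bound with $\alpha=f(p)$ and $\beta=g(p)$, squaring, dividing by $p$, and summing over primes $y<p\le x$ gives
\[
\sum_{y<p\le x}\frac{1-\Re(f(p)g(p))}{p}\le\SD(1,f;y,x)^2+\SD(1,g;y,x)^2+2\sum_{y<p\le x}\frac{\sqrt{(1-\Re f(p))(1-\Re g(p))}}{p}.
\]
A Cauchy--Schwarz inequality applied with weights $p^{-1/2}$ in each factor bounds the cross sum by $\SD(1,f;y,x)\cdot\SD(1,g;y,x)$, so the right-hand side becomes $(\SD(1,f;y,x)+\SD(1,g;y,x))^2$. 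Taking square roots finishes the argument.

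The main obstacle is the pointwise inequality; the bound $|\Im z|^2\le 2(1-\Re z)$ is the linchpin, and it uses the hypothesis $|z|\le1$ in an essential way. Once that is in place, the rest of the proof is nothing more than expanding a square and invoking Cauchy--Schwarz.
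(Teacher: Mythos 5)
Your proof is correct. The paper does not prove Lemma~\ref{triangle} itself but cites Granville and Soundararajan~\cite{gs2}; your argument is essentially the proof given there: establish the pointwise bound $\sqrt{1-\Re(\alpha\beta)}\le\sqrt{1-\Re\alpha}+\sqrt{1-\Re\beta}$ for $|\alpha|,|\beta|\le1$ via the key inequality $(\Im z)^2\le 2(1-\Re z)$, and then conclude by Minkowski's inequality in $\ell^2$ with weights $1/p$ (which is exactly what your ``expand the square and apply Cauchy--Schwarz to the cross term'' accomplishes). Every step checks out, including the discarding of the nonpositive term $-(1-\Re\alpha)(1-\Re\beta)$ and the use of $|\Re z|\le 1$ to get $1+\Re z\le 2$.
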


It is possible to relate the distance of two multiplicative functions to the value of a certain Dirichlet series. Here and for the rest of the paper, given an arithmetic function $f:\SN\to\SR$, we let $$L(s,f)=\sum_{n=1}^\infty\frac{f(n)}{n^s}\quad\text{and}\quad L_y(s,f)=\sum_{P^-(n)>y}\frac{f(n)}{n^s},$$ provided the series converge.

\begin{lemma}\label{l1} Let $x,y\ge2$, $t\in\SR$ and $f:\SN\to\{z\in\SC:|z|\le1\}$ be a multiplicative function. Then 
$$
\log\left|L_y\left(1+\frac1{\log x}+it,f\right)\right|=\sum_{y<p\le x}\frac{\Re(f(p)p^{-it})}{p}+O(1).
$$
\end{lemma}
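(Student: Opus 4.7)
The plan is to work with the Euler product for $L_y(s,f)$ at $s=\sigma+it$, where $\sigma=1+1/\log x>1$. Since $|f|\le 1$, the product
\[
L_y(s,f)=\prod_{p>y}\left(1+\sum_{k\ge1}\frac{f(p^k)}{p^{ks}}\right)
\]
converges absolutely, and every factor has modulus at least $1-1/(p^\sigma-1)\ge 1/2$ because $p>y\ge2$. In particular $L_y(s,f)\ne0$, so $\log|L_y(s,f)|$ is well defined and equals the real part of any branch of $\log L_y(s,f)$ coming from summing the logarithms of the Euler factors.

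Next, I would take logarithms of the Euler product and apply $\log(1+z)=z+O(|z|^2)$ to each factor, absorbing both the $k\ge2$ terms and the quadratic remainder into an $O(1)$ error via $\sum_p 1/p^2<\infty$, which yields
\[
\log L_y(s,f)=\sum_{p>y}\frac{f(p)}{p^s}+O(1).
\]
Writing $p^{-s}=p^{-\sigma}p^{-it}$ and taking real parts produces
\[
\log|L_y(s,f)|=\sum_{p>y}\frac{\Re(f(p)p^{-it})}{p^\sigma}+O(1),
\]
which is already in the right shape modulo the replacement of $p^{-\sigma}$ by $p^{-1}$ and truncation at $p=x$.

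To finish, the key input is the elementary bound $|p^{-\sigma}-p^{-1}|\le(\sigma-1)(\log p)/p$ (coming from $1-e^{-u}\le u$), which combined with the Chebyshev estimate $\sum_{p\le x}(\log p)/p\ll\log x$ gives $\sum_{p\le x}|p^{-\sigma}-p^{-1}|=O(1)$ thanks to the perfect cancellation $\sigma-1=1/\log x$. This immediately handles the swap of $p^{-\sigma}$ for $p^{-1}$ on $y<p\le x$ at cost $O(1)$; for the tail $p>x$, the same bound together with Mertens ($\sum_{p}1/p^\sigma=\log(1/(\sigma-1))+O(1)=\log\log x+O(1)$) and with $\sum_{p\le x}1/p=\log\log x+O(1)$ forces $\sum_{p>x}1/p^\sigma=O(1)$, so the tail is absorbed into the error and the lemma follows.

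The argument is essentially bookkeeping rather than genuinely hard, and the only subtle point is ensuring that the three sources of error---Taylor truncation of each Euler factor, the tail of the prime sum past $x$, and the swap of $p^{-\sigma}$ for $p^{-1}$---all come out $O(1)$ simultaneously; the sharp choice $\sigma-1=1/\log x$ is precisely what balances each of them against the $\log x$ size of $\sum_{p\le x}(\log p)/p$.
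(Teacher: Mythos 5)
Your proof is correct and follows essentially the same route as the paper: expand the Euler product of $L_y$, apply $\log(1+z)=z+O(|z|^2)$ to each factor (valid since each factor has modulus at least $1/2$ for $p>y\ge 2$), and then replace $p^{-\sigma}$ by $p^{-1}$ and truncate at $p=x$ using the choice $\sigma-1=1/\log x$. The only minor divergence is in how the truncation error is controlled: the paper establishes the single bound $\sum_{p\le x}\bigl(1/p - 1/p^{1+1/\log x}\bigr) + \sum_{p>x} 1/p^{1+1/\log x} \ll 1$ directly by partial summation against Chebyshev's estimate $\sum_{p\le u}\log p\ll u$, whereas you handle the tail $\sum_{p>x}1/p^\sigma$ indirectly by invoking the asymptotic $\sum_p 1/p^\sigma=\log\tfrac{1}{\sigma-1}+O(1)$ together with Mertens and subtracting. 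Both work, but the paper's direct partial-summation argument is slightly cleaner and avoids the extra asymptotic input; otherwise the two proofs agree in structure and in the key balance $\sigma-1=1/\log x$ that you correctly identify as what makes all three error sources come out $O(1)$ at once.
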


\begin{proof} This result has appeared in the literature before, at least implicitly (see \cite[Lemma 6.6 in p. 230, and p. 253]{el1} as well as~\cite[p. 459-460]{ten}). The current formulation is due to Granville and Soundararajan~\cite{gs}. The proof follows by writing $L_y(1+1/\log x+it,f)$ as an Euler product and observing that\footnote{Note that $|f(p)/p^s+f(p^2)/p^{2s}+\cdots|\le1/(p^\sigma-1)\le1/2$ for $p>y\ge2$ and $\Re(s)>1$.}
$\log|1+z|=\Re(z)+O(|z|^2)$ for $|z|\le1/2$, and that
\be\label{dist e2}
\sum_{p\le x}\left(\frac1p-\frac1{p^{1+1/\log x}}\right)+ \sum_{p>x} \frac1{p^{1+1/\log x}} \ll1,
\ee
by Chebyshev's estimate $\sum_{p\le u}\log p\ll u$ and partial summation.
\end{proof}

We conclude this section with the following lemma, which is a partial demonstration of the phenomenon mentioned towards the end of the introduction: if a multiplicative function $f:\SN\to\SU$ is small on average and it does not pretend to be $\mu(n)n^{it}$ for some $t$, then $f(p)$ is small on average.

\begin{lemma}\label{dist1} Let $y_2\ge y_1\ge y_0\ge2$. Consider a multiplicative function $f:\SN\to\{z\in\SC:|z|\le1\}$ such that $$\left|L_{y_0}'\left(1+\frac1{\log x},f\right)\right|\le c\log y_0\quad(y_1\le x\le y_2)$$ for some $c\ge1$ and $$\SD^2(f(n),\mu(n);y_0,x)\ge\delta\log\left(\frac{\log x}{\log y_0}\right)-M\quad(y_1\le x\le y_2)$$ for some $\delta>0$ and $M\ge0$. Then $$\left|\sum_{y_1<p\le y_2}\frac{f(p)}p\right|\ll_{c,M}\frac1\delta.$$
\end{lemma}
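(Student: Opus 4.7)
My plan is to rewrite $\sum_{y_1<p\le y_2}f(p)/p$ as an increment of (a branch of) $\log L_{y_0}(1+1/\log x,f)$ between $x=y_1$ and $x=y_2$, express that increment as an integral of $L_{y_0}'/L_{y_0}$ along the real segment $\{1+1/\log x:x\in[y_1,y_2]\}$, and estimate the integrand using the two hypotheses. Starting from the Euler product for $L_{y_0}(s,f)$ (which does not vanish in $\Re s>1$) and expanding the principal logarithm exactly as in the proof of Lemma~\ref{l1}, one verifies that
$$
\log L_{y_0}\!\left(1+\frac{1}{\log x},f\right)=\sum_{y_0<p\le x}\frac{f(p)}{p}+O(1)\qquad(x\ge y_0),
$$
with the $O(1)$ absorbing the prime-power contributions, the tail $p>x$ (bounded by $\sum_{p>x}p^{-1-1/\log x}=O(1)$), and the error from the factor $p^{-1/\log x}$ (bounded via $1-p^{-1/\log x}\le\log p/\log x$ by $\frac{1}{\log x}\sum_{y_0<p\le x}\log p/p=O(1)$ from Mertens). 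Subtracting the relation at $x=y_1$ from that at $x=y_2$ and applying the fundamental theorem of calculus then gives
$$
\sum_{y_1<p\le y_2}\frac{f(p)}{p}=-\int_{y_1}^{y_2}\frac{L_{y_0}'(1+1/\log x,f)}{L_{y_0}(1+1/\log x,f)}\cdot\frac{dx}{x\log^2 x}+O(1).
$$

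I would then bound the integrand via the hypotheses. The first hypothesis supplies $|L_{y_0}'(1+1/\log x,f)|\le c\log y_0$ on the range. For the denominator, Lemma~\ref{l1} at $t=0$ together with Mertens' formula yield
$$
\log\!\left|L_{y_0}\!\left(1+\frac{1}{\log x},f\right)\right|=\SD^2(f,\mu;y_0,x)-\log\!\frac{\log x}{\log y_0}+O(1),
$$
so the distance hypothesis produces $|L_{y_0}(1+1/\log x,f)|\gg e^{-M}(\log y_0/\log x)^{1-\delta}$ throughout $[y_1,y_2]$. Combining,
$$
\left|\frac{L_{y_0}'(1+1/\log x,f)}{L_{y_0}(1+1/\log x,f)}\right|\ll_{c,M}(\log y_0)^\delta(\log x)^{1-\delta}.
$$

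Substituting $u=\log x$ and using $\log y_0\le\log y_1$,
$$
\int_{y_1}^{y_2}(\log y_0)^\delta(\log x)^{1-\delta}\,\frac{dx}{x\log^2 x}=(\log y_0)^\delta\!\int_{\log y_1}^{\log y_2}\!\frac{du}{u^{1+\delta}}=\frac{(\log y_0)^\delta}{\delta}\!\left(\frac{1}{(\log y_1)^\delta}-\frac{1}{(\log y_2)^\delta}\right)\le\frac{1}{\delta}.
$$
The remaining additive $O(1)$ gets absorbed into $O_{c,M}(1/\delta)$, which is legitimate because the trivial bound $\SD^2(f,\mu;y_0,x)\le 2\log(\log x/\log y_0)+O(1)$ forces $\delta$ to be bounded in terms of $M$ in any nontrivial regime. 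This yields the desired $|\sum_{y_1<p\le y_2}f(p)/p|\ll_{c,M}1/\delta$. The only subtlety is choosing a consistent branch of $\log L_{y_0}$ along the segment, which is immediate as $L_{y_0}$ is non-vanishing there; otherwise the computation is routine and I anticipate no serious obstacle.
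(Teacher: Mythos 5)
Your proposal is correct and follows essentially the same route as the paper: the paper also writes $\sum_{y_1<p\le y_2}f(p)/p$ as $O(1)$ plus the integral $-\int_{y_1}^{y_2}\frac{L_{y_0}'}{L_{y_0}}(1+\tfrac1{\log u},f)\frac{du}{u\log^2u}$, then bounds the numerator by the derivative hypothesis and the denominator from below via Lemma~\ref{l1} and the distance hypothesis, finishing with the same substitution $u=\log x$. The only cosmetic difference is that you invoke the fundamental theorem of calculus on a branch of $\log L_{y_0}$, whereas the paper obtains the same integral identity directly by logarithmic differentiation of the Euler product and an interchange of sum and integral, thereby sidestepping the branch discussion; your observation about absorbing the additive $O(1)$ into $O_{c,M}(1/\delta)$ using the trivial upper bound on $\SD^2$ is also implicitly used (though not spelled out) in the paper.
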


\begin{proof} Let $s=\sigma+it\in\SC$ with $\sigma>1$ and $t\in\SR$. Note that $|f(p)/p^s+f(p^2)/p^{2s}+\cdots|\le1/(p^\sigma-1)\le1/2$ for $p>y_0\ge2$, so that
$$
\left(1+\frac{f(p)}{p^s}+\frac{f(p^2)}{p^{2s}}+\cdots\right)^{-1}=1+O\left(\frac1{p^\sigma}\right)\quad(p>y_0\ge2).
$$
Now, since $L_{y_0}(s,f)=\prod_{p>y_0}(1+f(p)/p^s+f(p^2)/p^{2s}+\cdots)$, logarithmic differentiation yields the formula
\begin{align*}
-\frac{L_{y_0}'}{L_{y_0}}(s,f)
&=\sum_{p>y_0} \frac{f(p)(\log p)/p^s+2f(p^2)(\log p)/p^{2s}+\cdots}{1+f(p)/p^s+f(p^2)/p^{2s}+\cdots}\\
&=\sum_{p>y_0}\left(\frac{f(p)\log p}{p^s}+O\left(\frac{\log p}{p^{2\sigma}}\right)\right)\left(1+O\left(\frac1{p^\sigma}\right)\right)\\
&=\sum_{p>y_0}\left(\frac{f(p)\log p}{p^s}+O\left(\frac{\log p}{p^{2\sigma}}\right)\right)
=\sum_{p>y_0}\frac{f(p)\log p}{p^s}+O(1).
\end{align*}
Moreover, relation~\eqref{dist e2} implies that
$$
\sum_{y_0<p\le z}\frac{f(p)}p=\sum_{p>y_0}\frac{f(p)}{p^{1+1/\log z}}+O(1)\quad(z\ge y_0).
$$
Combining the above formulas, we find that
\be\label{dist1 e10}\bsp
\sum_{y_1<p\le y_2}\frac{f(p)}p&=O(1)+\sum_{p>y_0}\frac{f(p)}{p^{1+1/\log y_2}}-\sum_{p>y_0}\frac{f(p)}{p^{1+1/\log y_1}}\\
&=O(1) + \sum_{p>y_0} \frac{f(p)\log p}{p} \int_{y_1}^{y_2}\frac{du}{p^{1/\log u}u\log^2u}\\
&=O(1) + \int_{y_1}^{y_2} \sum_{p>y_0} \frac{f(p)\log p}{p^{1+1/\log u}}\frac{du}{u\log^2u}\\
&=O(1) - \int_{y_1}^{y_2} \frac{L_{y_0}'}{L_{y_0}}\left(1+\frac1{\log u},f\right)\frac{du}{u\log^2u}.
\end{split}\ee
Moreover, Lemma~\ref{l1} and our assumptions on $f$ imply that, for $u\in[y_1,y_2]$, we have that
$$
\left|L_{y_0}\left(1+\frac1{\log u},f\right)\right|\asymp\left(\frac{\log y_0}{\log u}\right)\exp\left\{\SD^2(f(n),\mu(n);y_0,u)\right\}\gg_M\left(\frac{\log u}{\log y_0}\right)^{-1+\delta}
$$
and
$|L_{y_0}'(1+1/\log u,f)|\le c\log y_0$. Inserting these estimates into~\eqref{dist1 e10}, we conclude that
$$
\sum_{y_1<p\le y_2}\frac{f(p)}p\ll_{c,M}1+\int_{y_1}^{y_2}(\log y_0)\left(\frac{\log u}{\log y_0}\right)^{1-\delta} \frac{du}{u(\log u)^2}
=1+(\log y_0)^\delta\int_{y_1}^{y_2}\frac{du}{u(\log u)^{1+\delta}} \ll\frac1\delta,
$$
thus completing the proof of the lemma.

\end{proof}

%%%%%%%%%%%%%%%%%%%%%%%%%%%%%%%%%%%%%%%%%%%%%%%%%%%%%%%%%%%%%%%%%%%%%%%%%%%%%%%%%%%%%%%%%%%%%%%%%%%%%%%%%%%%%%%%%%%%%%%%%%%%
%%%%%%%%%%%%%%%%%%%%%%%%%%%%%%%%%%%%%%%%%%%%%%%%%%%%%%%%%%%%%%%%%%%%%%%%%%%%%%%%%%%%%%%%%%%%%%%%%%%%%%%%%%%%%%%%%%%%%%%%%%%%
%%%%%%%%%%%%%%%%%%%%%%%%%%%%%%%%%%%%%%%%%%%%%%%%%%%%%%%%%%%%%%%%%%%%%%%%%%%%%%%%%%%%%%%%%%%%%%%%%%%%%%%%%%%%%%%%%%%%%%%%%%%%
%%%%%%%%%%%%%%%%%%%%%%%%%%%%%%%%%%%%%%%%%%%%%%%%%%%%%%%%%%%%%%%%%%%%%%%%%%%%%%%%%%%%%%%%%%%%%%%%%%%%%%%%%%%%%%%%%%%%%%%%%%%%
%%%%%%%%%%%%%%%%%%%%%%%%%%%%%%%%%%%%%%%%%%%%%%%%%%%%%%%%%%%%%%%%%%%%%%%%%%%%%%%%%%%%%%%%%%%%%%%%%%%%%%%%%%%%%%%%%%%%%%%%%%%%

\section{Bounds on Dirichlet $L$-functions}\label{lchi}

In this section we list some essential estimates on the derivatives of $L_y(s,\chi)$ and $\frac{L_y'}{L_y}(s,\chi)$. We start with the following result, which is a consequence of Lemma~\ref{chinit}.

\begin{lemma}\label{lchil1} Let $\chi$ be a Dirichlet character modulo $q$, $k\in\SN\cup\{0\}$, and $s=\sigma+it$ with $\sigma>1$ and $t\in\SR$. For $y\ge3/2$ we have that
\be\label{lchil1 e1}
\left|L_y^{(k)}(s,\chi)+\frac{(-1)^{k+1}k!}{(s-1)^{k+1}}\frac{\delta(\chi)\phi(q)}q\prod_{\substack{p\le y\\p\nmid q}}\left(1-\frac1p\right)\right|\ll\frac{k!(c\log(yqV_t))^{k+1}}{\log y}.
\ee
In particular, if $y\ge\max\left\{qV_t,e^{\delta(\chi)/|t|}\right\}^\epsilon$ for some fixed $\epsilon>0$, then
\be\label{lchil1 e2}
\left|L_y^{(k)}(s,\chi)\right|\ll k!(c_\epsilon\log y)^k.
\ee
\end{lemma}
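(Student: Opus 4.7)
The plan is to derive \eqref{lchil1 e1} from Lemma~\ref{chinit} via partial summation, and then deduce \eqref{lchil1 e2} as an immediate corollary.

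Set $T(u):=\sum_{n\le u,\,P^-(n)>y}\chi(n)n^{-it}$ and $f(u):=(\log u)^k/u^\sigma$. Writing $L_y^{(k)}(s,\chi)=(-1)^k\sum_{P^-(n)>y}\chi(n)n^{-it}f(n)$ and applying Abel summation (the boundary term at infinity vanishes because $\sigma>1$), one obtains $L_y^{(k)}(s,\chi)=(-1)^{k+1}\int_1^\infty T(u)f'(u)\,du$. For $u\ge X_0:=\max\{q^4,V_t^{100}\}$ I would invoke Lemma~\ref{chinit} with $-t$ in place of $t$ (noting $V_{-t}=V_t$), which gives
\bes
T(u)=\delta(\chi)\frac{\phi(q)}{q}C_y\frac{u^{1-it}}{1-it}+E(u),\quad C_y:=\!\!\prod_{\substack{p\le y\\p\nmid q}}\!\left(1-\tfrac1p\right),\quad |E(u)|\ll\frac{u^{1-\eta_1}+u^{1-\eta_2}}{\log y},
\ees
with $\eta_1=1/(30\log y)$ and $\eta_2=1/(100\log V_t)$; and for $u\in[1,X_0]$ I would use the sieve bound $|T(u)|\ll 1+u/\log y$, a direct consequence of Lemma~\ref{fund}.

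The main term of \eqref{lchil1 e1} emerges from the standard identity $\int_1^\infty(\log u)^k u^{-s}\,du=k!/(s-1)^{k+1}$: integration by parts gives $\int_1^\infty u^{1-it}f'(u)\,du=-(1-it)k!/(s-1)^{k+1}$, modulo an $O(1)$ boundary contribution at $u=1$ present only when $k=0$. Substituting the main piece of $T(u)$ into the full range $[1,\infty)$ yields exactly $\frac{(-1)^k k!}{(s-1)^{k+1}}\delta(\chi)\phi(q)C_y/q$, which carries the correct sign relative to the correction in \eqref{lchil1 e1}. It remains to bound three error pieces: the complement $[1,X_0]$ in the main-term integration, the $[1,X_0]$ integral of the actual $T$, and the $[X_0,\infty)$ integral of $E$. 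Using $|T|\ll 1+u/\log y$, the first two are each $\ll k!+(\log X_0)^{k+1}/((k+1)\log y)$, while the third is $\ll\frac{1}{\log y}\sum_i\int_1^\infty(\log u)^k u^{-\sigma-\eta_i}du\le\frac{1}{\log y}\sum_i k!/\eta_i^{k+1}$. Since $\log X_0\le 100\log(qV_t)\le 100\log(yqV_t)$ and $1/\eta_i\ll\log(yV_t)$, choosing $c$ a sufficiently large absolute constant (e.g.\ $c=100$, using the elementary inequality $(100/c)^{k+1}\le(k+1)!$ for $k\ge0$) absorbs all three contributions into $k!(c\log(yqV_t))^{k+1}/\log y$, establishing \eqref{lchil1 e1}.

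For \eqref{lchil1 e2}, the hypothesis $y\ge(qV_t)^\epsilon$ gives $\log(yqV_t)\ll_\epsilon\log y$, so the error term in \eqref{lchil1 e1} is already $\ll k!(c_\epsilon\log y)^k$. The main term vanishes unless $\chi$ is principal, in which case the hypothesis $y\ge e^{\epsilon/|t|}$ forces $|s-1|\ge|t|\ge\epsilon/\log y$; combining this with the identity $\frac{\phi(q)}{q}C_y=\prod_{p\mid q\,\text{or}\,p\le y}(1-1/p)\ll1/\log y$ bounds the main term by $\ll_\epsilon k!(\log y)^k$, completing the proof. The main obstacle I foresee is purely administrative: preserving the factor $k!$ throughout requires the $1/\log y$ gain supplied by the sieve bound on $T$ in $[1,X_0]$ --- without it, the $(\log X_0)^{k+1}/(k+1)$ contribution would overwhelm the target estimate --- and the constant $c$ in the right-hand side of \eqref{lchil1 e1} must be chosen large enough to dominate the factor $(\log X_0/\log(yqV_t))^{k+1}$ coming from the definition $X_0=\max\{q^4,V_t^{100}\}$.
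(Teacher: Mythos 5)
Your argument is essentially the paper's: cut the range at a point beyond which Lemma~\ref{chinit} applies, feed the resulting asymptotic for $T(u)=\sum_{n\le u,\,P^-(n)>y}\chi(n)n^{-it}$ into a partial-summation expansion of $L_y^{(k)}$, and treat the initial segment with the sieve bound $|T(u)|\ll 1+u/\log y$; the paper truncates the Dirichlet series at $z=\max\{y,q^4,V_t^{100}\}$ before doing partial summation, whereas you do Abel summation globally and split the integral, but these are cosmetically different versions of the same computation.

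One small slip: your cut-off $X_0=\max\{q^4,V_t^{100}\}$ omits $y$, but Lemma~\ref{chinit} also requires $x\ge y$; when $y>X_0$ the lemma gives no information on $[X_0,y)$, and indeed its conclusion can fail there (e.g.\ $T(u)=1$ on $u<y$ need not be absorbed by the error term when $\log y$ is large relative to $X_0$). Replacing $X_0$ by $\max\{y,q^4,V_t^{100}\}$ fixes this at no cost, since $\log\max\{y,q^4,V_t^{100}\}\le 100\log(yqV_t)$. Also, the bound $|T(u)|\ll 1+u/\log y$ for all $u$ is correct but is not quite a ``direct consequence'' of Lemma~\ref{fund} alone; for $u$ in the range $[y,\,y^{O(1)}]$ one should appeal to a Chebyshev/Mertens-type bound (or note $T(u)\ll 1+\pi(u)$) rather than the fundamental lemma, which needs $u\ge y^2$ or so to be useful.
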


\begin{proof} Set $z=\max\{y,q^4,V_t^{100}\}$ and note that
\bes\label{lchi e1}
(-1)^kL_y^{(k)}(s,\chi) =  \sum_{\substack{n>z\\P^-(n)>y}} \frac{\chi(n)(\log n)^k}{n^s} + O\left(\frac{(\log z)^{k+1}}{\log y}\right).
\ees
Moreover, Lemma~\ref{chinit} implies that
$$
\sum_{\substack{n\le u\\P^-(n)>y}}\chi(n)n^{-it} = \frac{\delta(\chi) \phi(q) }{q} \frac{u^{1-it}} {1-it} \prod_{\substack{p\le y\\p\nmid q}} \left(1-\frac1p\right) +R_t(u)
$$
with $R_t(u)\ll u^{1-1/(30\log z)}/\log y$ for $u\ge z$. Consequently, we have that
\bes\bsp
\sum_{\substack{n>z\\P^-(n)>y}} \frac{\chi(n)(\log n)^k}{n^s}
&= \int_z^\infty \frac{(\log u)^k}{u^\sigma} d\left( \frac{\delta(\chi) \phi(q) }{q} \frac{u^{1-it}} {1-it} \prod_{\substack{p\le y\\p\nmid q}} \left(1-\frac1p\right) + R_t(u)\right) \\
&= \frac{\delta(\chi)\phi(q)}q \prod_{\substack{p\le y\\p\nmid q}}\left(1-\frac1p\right) \int_z^\infty\frac{(\log u)^k}{u^s}du
+\int_z^\infty\frac{(\log u)^k}{u^\sigma}d R_t(u).
\end{split}\ees
Since
\bes\bsp
\int_z^\infty\frac{(\log u)^k}{u^\sigma}d R_t(u) &= -\frac{(\log z)^kR_t(z)}{z^\sigma} + \int_z^\infty \frac{(\log u)^{k-1}(\sigma\log u-k)}{u^{\sigma+1}}R_t(u)du \\
&\ll\frac{(\log z)^k}{\log y}+\frac{\sigma+k}{\log y}\int_z^\infty\frac{(\log u)^k}{u^{\sigma+1/(30\log z)}}du\\
&\le\frac{(\log z)^k}{\log y}+\frac{\sigma+k}{z^{\sigma-1}\log y}\int_z^\infty\frac{(\log u)^k}{u^{1+1/(30\log z)}}du
	\ll \frac{ (k+1)!(30\log z)^{k+1} }{\log y},
\end{split}\ees
by partial summation, and
$$
\int_z^\infty\frac{(\log u)^k}{u^s}du=\int_1^\infty\frac{(\log u)^k}{u^s}du+O((\log z)^{k+1})=\frac{k!}{(s-1)^{k+1}}+O((\log z)^{k+1}),
$$
by observing that $(\log u)^mu^{-s}=(\log u)^m\frac{d(u^{1-s}/(1-s))}{du}$ for all $m\ge0$ and integrating by parts $k$ times, relation~\eqref{lchil1 e1} follows. Finally, relation~\eqref{lchil1 e2} is a direct consequence of relation~\eqref{lchil1 e1}, since $|s-1|\ge|t|\ge\epsilon\cdot\delta(\chi)/\log y$ under the assumption that $y\ge e^{\epsilon\cdot\delta(\chi)/|t|}$. This completes the proof of the lemma.
\end{proof}

The next lemma provides a lower bound on $L_y(s,\chi)$ close to the line $\Re(s)=1$.

\begin{lemma}\label{lchil2} Fix $\epsilon\in(0,1]$. Let $\chi$ be a Dirichlet character modulo $q$, $s=\sigma+it$ with $\sigma>1$ and $t\in\SR$, and $y\ge qV_t$. If $|t|\ge\epsilon/\log y$ or if $\chi$ is complex, then we have that $|L_y(s,\chi)|\asymp_\epsilon1$. Finally, if $\chi$ is a non-principal, real character, and $|t|\le1/\log y$, then $|L_y(s,\chi)|\gg L_y(1,\chi)$.
\end{lemma}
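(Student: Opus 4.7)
The upper bound $|L_y(s,\chi)|\ll_\epsilon 1$ comes directly from Lemma~\ref{lchil1} with $k=0$: for non-principal $\chi$ the main term vanishes, while for $\chi=\chi_0$ (allowed only in case~(a)) the hypothesis $|t|\ge\epsilon/\log y$ forces $|s-1|\ge\epsilon/\log y$, bounding the main term by $\frac{1}{|s-1|\log y}\ll 1/\epsilon$.

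For the lower bound in the first two cases I will use the classical Hadamard--de~la Vall\'ee Poussin trick. The pointwise inequality $3+4\cos\theta+\cos 2\theta=2(1+\cos\theta)^2\ge 0$, applied term by term to the power-series expansion of $\log L_y$ as an Euler product, gives
\[
|L_y(\sigma,\chi_0)|^3\,|L_y(\sigma+it,\chi)|^4\,|L_y(\sigma+2it,\chi^2)|\ge 1\qquad(\sigma>1).
\]
Specializing at the base point $\sigma_0:=1+1/\log y$, Lemma~\ref{lchil1} supplies $|L_y(\sigma_0,\chi_0)|\ll 1$ (since $(\sigma_0-1)\log y=1$ forces the main term to be $\asymp e^{-\gamma}$) and $|L_y(\sigma_0+2it,\chi^2)|\ll_\epsilon 1$, either because $\chi^2$ is non-principal (when $\chi$ is complex) or because $|t|\ge\epsilon/\log y$ keeps the main term of the principal series bounded. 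Substituting back yields $|L_y(\sigma_0+it,\chi)|\gg_\epsilon 1$.

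Transferring this to an arbitrary $\sigma>1$ breaks into two subranges. For $\sigma\ge\sigma_0$, the logarithm of the ratio $L_y(\sigma+it,\chi)/L_y(\sigma_0+it,\chi)$ is controlled by $\sum_{p>y}p^{-\sigma_0}=O(1)$ (elementary partial summation at the scale $\sigma_0-1=1/\log y$), yielding a multiplicative comparison. For $\sigma\in(1,\sigma_0)$ the geometric derivative bounds $|L_y^{(j)}(\sigma_0+it,\chi)|\le j!(c_\epsilon\log y)^j$ from Lemma~\ref{lchil1} let me Taylor-expand $L_y$ around $\sigma_0$ on a disc of radius $\gg 1/\log y$ with bounded Taylor coefficients, so the change in $|L_y|$ over $[\sigma,\sigma_0]$ remains under control; should the resulting constants be too tight, I will raise the base point to $\sigma_0':=1+C/\log y$ for a sufficiently large absolute $C$ so that the Mertens bound $\gg_\epsilon C^{3/4}$ dominates the transfer loss.

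In case~(c), $\chi$ real non-principal with $|t|\le 1/\log y$, the identity $\chi^2=\chi_0$ turns Mertens' inequality into $|L_y(\sigma,\chi_0)|^3|L_y(\sigma+it,\chi)|^4|L_y(\sigma+2it,\chi_0)|\ge 1$; a first-order Taylor estimate gives $|L_y(\sigma+2it,\chi_0)|\asymp|L_y(\sigma,\chi_0)|$, and hence $|L_y(\sigma+it,\chi)|\gg 1/L_y(\sigma,\chi_0)$. The concluding inequality $1/L_y(\sigma,\chi_0)\gg L_y(1,\chi)$ will come from the positivity identity $L_y(s,\chi_0)L_y(s,\chi)=\prod_{p>y,\,\chi(p)=1}(1-p^{-s})^{-2}\prod_{p>y,\,\chi(p)=-1}(1-p^{-2s})^{-1}$, a Dirichlet series with nonnegative coefficients whose value at real $\sigma>1$ is at least $1$, combined with a derivative estimate replacing $L_y(\sigma,\chi)$ by $L_y(1,\chi)$. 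The main obstacle I anticipate is the transfer step for $\sigma$ just above $1$: the Mertens lower bound and the additive error from integrating $L_y'$ live on the same scale, so making the lower bound genuinely survive will require careful accounting of implicit constants, and quite possibly working with the enlarged base point $\sigma_0'=1+C/\log y$ described above.
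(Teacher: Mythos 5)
Your proof takes a genuinely different route: where the paper's argument runs through the pretentious-distance machinery (Lemma~\ref{triangle}, Lemma~\ref{l1}, and crucially the integral in Lemma~\ref{dist1}), you instead deploy the pointwise Hadamard--de la Vall\'ee Poussin ``$3$--$4$--$1$'' inequality at a base point together with a Taylor transfer. Both approaches are built on the same underlying positivity $3+4\cos\theta+\cos 2\theta\ge0$, but the way they handle the range $\sigma$ close to $1$ is quite different, and this is exactly where your version runs into trouble.

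For the first part your plan is workable, but the direction in which you tune the base point is backwards. From Lemma~\ref{l1}, $|L_y(1+C/\log y,\chi_0)|\asymp 1/C$ only for $C\le 1$; for $C\ge1$ it is simply $\asymp 1$ (indeed $L_y(\sigma,\chi_0)\ge1$ always, so it cannot get small). Thus the $3$--$4$--$1$ inequality at $\sigma_0'=1+C/\log y$ gives a lower bound $\gg_\epsilon C^{3/4}$ that is meaningful only when $C$ is \emph{small}, while the transfer loss over $(1,\sigma_0')$ is $\ll c_\epsilon(\sigma_0'-1)\log y=c_\epsilon C$. Since $C^{3/4}/C\to\infty$ as $C\to0$, you should take $C$ \emph{small} (of size depending on $\epsilon$), not large as you propose; increasing $C$ only makes the transfer loss grow linearly while the $3$--$4$--$1$ lower bound saturates at a constant. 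With $C$ chosen small enough in terms of $\epsilon$, the argument closes; note also that for $\sigma\ge1+1/\log y$ the conclusion is immediate from $|\log L_y(s,\chi)|\le\sum_{p>y}p^{-\sigma}+O(1)=O(1)$, so only $1<\sigma<1+1/\log y$ needs the $3$--$4$--$1$ step at all.

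The last case has a genuine gap. The $3$--$4$--$1$ inequality combined with $|L_y(\sigma+2it,\chi_0)|\le L_y(\sigma,\chi_0)$ does give $|L_y(\sigma+it,\chi)|\ge 1/L_y(\sigma,\chi_0)$, but since $L_y(\sigma,\chi_0)\asymp 1/((\sigma-1)\log y)$ for $\sigma$ near $1$, the right-hand side tends to $0$ as $\sigma\to1^+$, so the inequality $1/L_y(\sigma,\chi_0)\gg L_y(1,\chi)$ that you want to conclude with is simply false in the range you need. The positivity identity you cite only gives $L_y(\sigma,\chi)\ge 1/L_y(\sigma,\chi_0)$, which points in the wrong direction: it does not bound $1/L_y(\sigma,\chi_0)$ from \emph{below} by $L_y(1,\chi)$. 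The paper avoids this by working with the prime sum rather than $L_y$ directly: via Lemma~\ref{l1} it reduces to estimating $\sum_{y<p\le x}\Re(\chi(p)p^{-it})/p$ with $x=\max\{e^{1/(\sigma-1)},y\}$, truncates at $z=\min\{x,e^{1/|t|}\}$ where $n^{it}$ is essentially constant, uses the upper bound of Lemma~\ref{lchil1} to show $\sum_{z<p\le w}\chi(p)/p\le O(1)$ for every $w\ge z$ (a one-sided bound is all that is available), and then lets $w\to\infty$ to compare with $L_y(1,\chi)$. Some argument of this one-sided, global nature is needed here; the pointwise $3$--$4$--$1$ inequality cannot produce it on its own.
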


\begin{proof} First, assume that either $|t|\ge\epsilon/\log y$ or $\chi$ is complex. Equivalently, $|t|\ge\epsilon \delta(\chi^2)/\log y$. Note that for every $x\ge y$ we have that
\be\label{lbound e1}\bsp
2\cdot\SD(\chi(n),\mu(n)n^{it};y,x)&=\SD(1,\overline{\chi}(n)\mu(n)n^{it};y,x) + \SD(1,\overline{\chi}(n)\mu(n)n^{it};y,x)\\
&\ge\SD(1,\overline{\chi}^2(n)\mu^2(n)n^{2it};y,x)=\SD(\chi^2(n),n^{2it};y,x),
\end{split}\ee
by Lemma~\ref{triangle}. Mertens's estimate on $\sum_{p\le t}1/p$ and Lemma~\ref{l1} imply that
\bes\bsp
\SD^2(\chi^2(n),n^{2it};y,x)&=\log\left(\frac{\log x}{\log y}\right)+O(1)-\sum_{y<p\le x}\frac{\Re(\chi^2(p)p^{-2it})}p\\
&= \log\left(\frac{\log x}{\log y}\right)+O(1)-\log\left|L_y\left(1+\frac1{\log x}+2it,\chi^2\right)\right|.
\end{split}\ees
So, applying relation~\eqref{lchil1 e2} with $k=0$, we find that
$$
\SD^2(\chi^2(n),n^{2it};y,x)\ge\log\left(\frac{\log x}{\log y}\right)+O(1).
$$
Inserting this estimate into~\eqref{lbound e1}, we deduce that
$$
\SD^2(\chi(n),\mu(n)n^{it};y,x)\ge\frac14\log\left(\frac{\log x}{\log y}\right)+O(1)\quad\left(x\ge y\ge\max\left\{qV_t,e^{\epsilon\cdot\delta(\chi^2)/|t|}\right\}\right).
$$
The above inequality, and relation~\eqref{lchil1 e2} with $k=1$, allow us to apply Lemma~\ref{dist1} with $y_0=y_1=y$ and any $y_2>y$. So we conclude that
\be\label{lchie1}
\left|\sum_{y<p\le x}\frac{\chi(p)}{p^{1+it}}\right|\ll1 \quad\left(x\ge y\ge\max\left\{qV_t,e^{\epsilon\cdot\delta(\chi^2)/|t|}\right\}\right).
\ee
The above relation for $x=\max\{e^{1/(\sigma-1)},y\}$ and Lemma~\ref{l1} imply that $|L_y(s,\chi)|\asymp1$, which completes the proof of the first part of the lemma.

Finally, assume that $\chi$ is a real, non-principal character, and that $|t|\le1/\log y$. Let $x=\max\{e^{1/(\sigma-1)},y\}$ and $z=\min\{x,e^{1/|t|}\}\ge y\ge qV_t$. Then we have that
$$
\left|\sum_{z<p\le x}\frac{\chi(p)}{p^{1+it}}\right|\ll1,
$$
trivially if $z=x$, and by relation~\eqref{lchie1} with $z$ in place of $y$ otherwise, which holds, since in this case $z=e^{1/|t|}\ge e^{\delta(\chi^2)/|t|}$.
So we deduce that
$$
\sum_{y<p\le x}\frac{\chi(p)}{p^{1+it}}
= \sum_{y<p\le z}\frac{\chi(p)}{p^{1+it}} +O(1)
= \sum_{y<p\le z}\frac{\chi(p)+O(|t|\log p)}p + O(1)
= \sum_{y<p\le z}\frac{\chi(p)}p + O(1).
$$
Finally, for every $w\ge z\ge qV_t$, we have that
$$
\sum_{z<p\le w}\frac{\chi(p)}p=\log\left|L_z\left(1+\frac1{\log w},\chi\right)\right|+O(1)\le O(1),
$$
by Lemma~\ref{l1}, and relation~\eqref{lchil1 e2} with $k=0$. So
$$
\sum_{y<p\le x}\frac{\Re(\chi(p)p^{-it})}p = \sum_{y<p\le z}\frac{\chi(p)}p + O(1) \ge \sum_{y<p\le w} \frac{\chi(p)}p+O(1)\quad(w\ge z).
$$
Lemma~\ref{l1} then implies that $|L_y(s,\chi)|\gg L_y(1+1/\log w,\chi)$ for all $w\ge z$. Letting $w\to\infty$ completes the proof of the last part of the lemma too.
\end{proof}

Finally, we prove an estimate for the derivatives of $\frac{L_y'}{L_y}(s,\chi)$, which will be key in the proof of Theorem~\ref{pntap}.

\begin{lemma}\label{lchil3} Let $\chi$ be a Dirichlet character modulo $q$ and $s=\sigma+it$ with $\sigma>1$ and $t\in\SR$. For every $k\in\SN$ we have that
$$
\left|\left(\frac{L'}{L}\right)^{(k-1)}(s,\chi)+\frac{\delta(\chi)(-1)^{k-1}(k-1)!}{(s-1)^k}\right|
\ll\left(\frac{ck\log(qV_t)}{\delta(\chi)+(1-\delta(\chi))|L_{qV_t}(s,\chi)|}\right)^k.
$$
\end{lemma}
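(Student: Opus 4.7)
The plan is to bound $(L'/L)^{(k-1)}(s,\chi)+\delta(\chi)(-1)^{k-1}(k-1)!/(s-1)^k$ by splitting via the factorization $L(s,\chi)=L_y(s,\chi)P_y(s,\chi)$ with $y=qV_t$, where $P_y(s,\chi)=\prod_{p\le y,\,p\nmid q}(1-\chi(p)/p^s)^{-1}$. Setting $F(s)=(s-1)^{\delta(\chi)}L_y(s,\chi)$, so that $F'/F(s)=L_y'/L_y(s,\chi)+\delta(\chi)/(s-1)$, the target quantity equals $(F'/F)^{(k-1)}(s)+(P_y'/P_y)^{(k-1)}(s,\chi)$.

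I would first bound the $P_y$ contribution directly via its Dirichlet series $(P_y'/P_y)^{(k-1)}(s,\chi)=(-1)^k\sum_{p\le y,\,p\nmid q}\sum_{m\ge1}\chi(p)^m m^{k-1}(\log p)^k/p^{ms}$. Using the elementary inequality $\sum_{m\ge1}m^{k-1}x^m\le(k-1)!x/(1-x)^k$, Mertens's estimate $\sum_{p\le y}(\log p)^k/p=(\log y)^k/k+O((\log y)^{k-1})$, and Stirling's formula, this yields $|(P_y'/P_y)^{(k-1)}(s,\chi)|\ll(ck\log y)^k$, within the target. For $(F'/F)^{(k-1)}(s)$, I apply Lemma~\ref{derlemma}. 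For non-principal $\chi$, $F=L_y$ and Lemma~\ref{lchil1}(\ref{lchil1 e2}) yields $|F^{(j)}(s)|\le j!(c\log y)^j$; the choice $M\asymp\log y$ together with $|L_y(s,\chi)|$ as the lower bound on $|F|$ gives the desired bound. For principal $\chi$, $F=(s-1)L_y(s,\chi)$; the key observation is that substituting the expansion $L_y^{(j)}(s,\chi)=(-1)^jj!E_y/(s-1)^{j+1}+R_j(s)$ from Lemma~\ref{lchil1}(\ref{lchil1 e1}) (with $E_y=\prod_{p\le y,\,p\nmid q}(1-1/p)$ and $|R_j|\ll j!(c\log y)^j$) into the Leibniz formula $F^{(j)}=(s-1)L_y^{(j)}+jL_y^{(j-1)}$ causes the singular main terms $\pm(-1)^jj!E_y/(s-1)^j$ to cancel exactly, leaving $F^{(j)}=(s-1)R_j+jR_{j-1}$, and hence $|F^{(j)}(s)|\ll j!(c\log y)^j$ for $|s-1|\ll1$. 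Combined with the lower bound $|F(s)|\gg E_y\asymp1/\log y$ near $s=1$ (from the same expansion), Lemma~\ref{derlemma} applied to the normalized function $F/E_y$ yields $|(F'/F)^{(k-1)}(s)|\ll(ck\log y)^k$.

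The main obstacle is the uniform treatment of the principal case as $s=\sigma+it$ varies with $\sigma>1$, $t\in\SR$. A case analysis is needed based on the size of $|s-1|$: for small $|s-1|$ the expansion and lower-bound analysis above suffices, while for $|t|\ge\epsilon/\log y$ Lemma~\ref{lchil2} provides $|L_y(s,\chi)|\asymp1$ and Lemma~\ref{lchil1}(\ref{lchil1 e2}) applies directly to $L_y$, with the singular correction $(-1)^{k-1}(k-1)!/(s-1)^k$ itself being $\ll(ck\log y)^k$ in that regime. Careful stitching of these two regimes completes the argument.
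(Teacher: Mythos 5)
Your approach matches the paper's: decompose $L(s,\chi)=L_y(s,\chi)P_y(s,\chi)$ with $y=qV_t$, bound the $P_y$ contribution directly from its Dirichlet series (the paper does this implicitly in the step corresponding to your $(P_y'/P_y)^{(k-1)}$ estimate), and apply Lemma~\ref{derlemma} to the normalization $F=(s-1)^{\delta(\chi)}L_y(s,\chi)\prod_{p\le y}(1-1/p)^{-\delta(\chi)}$; your Leibniz-cancellation observation for the principal character and your invocation of Lemma~\ref{lchil2} when $|t|\ge\epsilon/\log y$ are exactly the paper's Case 3 and Case 2, respectively. The non-principal case you treat uniformly, which is fine since for $\delta(\chi)=0$ the hypothesis of \eqref{lchil1 e2} holds for every $t$.

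However, for the principal character your two regimes do not cover all of $\{\sigma>1,\,t\in\SR\}$: the case $\sigma\ge 1+c/\log y$ with $|t|<\epsilon/\log y$ is left out, and neither of your two arguments applies there. Relation \eqref{lchil1 e2} requires $y\ge e^{\epsilon\delta(\chi)/|t|}$, i.e.\ $|t|\ge\epsilon/\log y$ when $\delta(\chi)=1$, so it is unavailable; and the expansion-based bound on the normalized $F$ degrades once $|s-1|$ is no longer $O(1/\log y)$, since $(F/E_y)^{(j)}\ll j!(c\log y)^j(1+|s-1|\log y)$ fails to be of the form $j!M^j$ with $M\asymp\log y$. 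The paper fills this gap (its Case 1, $\sigma\ge 1+\epsilon/\log y$) with the trivial bound
\[
\left|\left(\frac{L'}{L}\right)^{(k-1)}(s,\chi)+\frac{\delta(\chi)(-1)^{k-1}(k-1)!}{(s-1)^k}\right|\le\sum_{n\ge1}\frac{\Lambda(n)(\log n)^{k-1}}{n^{1+\epsilon/\log y}}+\frac{(k-1)!}{(\epsilon/\log y)^k}\ll(ck\log y)^k,
\]
which you would need to add (alternatively, observe that \eqref{lchil1 e1} itself gives $|L_y^{(j)}(s)|\ll j!(c\log y)^j$ once $\sigma-1\gg1/\log y$, and $|L_y(s,\chi_0)|\asymp1$ there, so Lemma~\ref{derlemma} applied directly to $L_y$ also works). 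This is a small but real omission; the rest of the argument is sound and essentially identical to the paper's.
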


\begin{proof} Set $y=qV_t$ and fix some constant $\epsilon$ to be chosen later. We separate three cases.

\medskip

\textit{Case 1: $\sigma\ge1+\epsilon/\log y$}. Note that $\delta(\chi)+(1-\delta(\chi))|L_{qV_t}(s,\chi)|\ll1$, trivially if $\delta(\chi)=1$, and by relation~\eqref{lchil1 e2} with $k=0$ if $\delta(\chi)=0$. Since we also have that
$$
\left|\left(\frac{L'}{L}\right)^{(k-1)}(s,\chi)+\frac{\delta(\chi)(-1)^{k-1}(k-1)!}{(s-1)^k}\right|
\le\sum_{n=1}^\infty\frac{\Lambda(n)(\log n)^{k-1}}{n^{1+\epsilon/\log y} }+\frac{(k-1)!}{(\epsilon/\log y)^k} \ll (c_1k\log y)^k
$$
for some $c_1=c_1(\epsilon)$, the lemma follows.

\medskip

\textit{Case 2: $|t|>\epsilon/\log y$}. Note that
\be\label{lchie3}
\left(\frac{L'}{L}\right)^{(k-1)}(s,\chi) = O\left((c_2k\log y)^k\right) + \left(\frac{L_y'}{L_y}\right)^{(k-1)}(s,\chi).
\ee
Furthermore, relation~\eqref{lchil1 e2} implies that $|L_y^{(j)}(s,\chi)|\le j!(c_3\log y)^j$ for all $j\in\SN$, for some $c_3=c_3(\epsilon)$. Additionally, we have that $|L_y(s,\chi)|\asymp_\epsilon1$ by Lemma~\ref{lchil2}. So Lemma~\ref{derlemma} applied to $F(s)=L_y(s,\chi)$ yields that the right hand side
of~\eqref{lchie3} is $\ll(c_4k\log y)^k$ for some $c_4=c_4(\epsilon)$, and the lemma follows (note that in this case $|s-1|\ge |t|\ge \epsilon/ \log y$).

\medskip

\textit{Case 3: $|s-1|\le2\epsilon/\log y$}. Let
$$
F(s)=(s-1)^{\delta(\chi)}L_y(s,\chi)\prod_{p\le y}\left(1-\frac1p\right)^{-\delta(\chi)},
$$
and observe that
$$
F^{(j)}(s) = \left( (s-1)^{\delta(\chi)} L_y^{(j)}(s,\chi) + \delta(\chi) j L_y^{(j-1)}(s,\chi) \right) \prod_{p\le y} \left( 1- \frac1p\right)^{-\delta(\chi)} \ll j!(c_5\log y)^j,
$$
for all $j\in\SN$, by relation~\eqref{lchil1 e1}. So Lemma~\ref{derlemma} implies that
$$
\left(\frac{L_y'}{L_y}\right)^{(k-1)}(s,\chi) + \frac{\delta(\chi)(-1)^{k-1}(k-1)!}{(s-1)^k} = \left(\frac{F'}{F}\right)^{(k-1)}(s) \ll \left(\frac{c_6k\log y}{\min\{|F(s)|,1\}}\right)^k.
$$
Together with~\eqref{lchie3}, the above estimate reduces the desired result to showing that
\be\label{lchil3 e1}\min\{|F(s)|,1\}\asymp\delta(\chi)+(1-\delta(\chi))|L_y(s,\chi)|.
\ee
If $\delta(\chi)=0$, then~\eqref{lchil3 e1} holds, since $|F(s)|=|L_{y}(s,\chi)|\ll1$, by relation~\eqref{lchil1 e2} with $k=0$. Lastly, if $\delta(\chi)=1$, then $|F(s)|=1+O(|s-1|\log y)=1+O(\epsilon)$ by relation~\eqref{lchil1 e1} with $k=0$ (note that $\prod_{p\le y,\,p|q}(1-1/p)=\phi(q)/q$, since $y>q$ by assumption). So choosing $\epsilon$ small enough (independently of $k$, $q$ and $s$), we find that $|F(s)|\asymp1$, that is,~\eqref{lchil3 e1} is satisfied in this case too. This completes the proof of~\eqref{lchil3 e1} and hence of the lemma.
\end{proof}

%%%%%%%%%%%%%%%%%%%%%%%%%%%%%%%%%%%%%%%%%%%%%%%%%%%%%%%%%%%%%%%%%%%%%%%%%%%%%%%%%%%%%%%%%%%%%%%%%%%%%%%%%%%%%%%%%%%%%%%%%%%%
%%%%%%%%%%%%%%%%%%%%%%%%%%%%%%%%%%%%%%%%%%%%%%%%%%%%%%%%%%%%%%%%%%%%%%%%%%%%%%%%%%%%%%%%%%%%%%%%%%%%%%%%%%%%%%%%%%%%%%%%%%%%
%%%%%%%%%%%%%%%%%%%%%%%%%%%%%%%%%%%%%%%%%%%%%%%%%%%%%%%%%%%%%%%%%%%%%%%%%%%%%%%%%%%%%%%%%%%%%%%%%%%%%%%%%%%%%%%%%%%%%%%%%%%%
%%%%%%%%%%%%%%%%%%%%%%%%%%%%%%%%%%%%%%%%%%%%%%%%%%%%%%%%%%%%%%%%%%%%%%%%%%%%%%%%%%%%%%%%%%%%%%%%%%%%%%%%%%%%%%%%%%%%%%%%%%%%
%%%%%%%%%%%%%%%%%%%%%%%%%%%%%%%%%%%%%%%%%%%%%%%%%%%%%%%%%%%%%%%%%%%%%%%%%%%%%%%%%%%%%%%%%%%%%%%%%%%%%%%%%%%%%%%%%%%%%%%%%%%%

\section{Siegel's theorem}\label{siegelproof}

In this section we prove Theorem~\ref{siegel}. In fact, we will show a more precise result, originally due to Tatuzawa~\cite{Ta}, from which Theorem~\ref{siegel} follows immediately:

\begin{theorem}\label{siegel2} Let $\epsilon\in(0,1]$. With at most one exception, for all real, non-principal, primitive Dirichlet characters $\chi$, we have that $L(1,\chi)\gg\epsilon q^{-\epsilon}$, where $q$ denotes the conductor of $\chi$; the implied constant is effectively computable.
\end{theorem}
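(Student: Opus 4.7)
Plan. I would follow the classical Landau-type approach. Suppose, toward contradiction, that two distinct real primitive non-principal characters $\chi_1 \pmod{q_1}$ and $\chi_2 \pmod{q_2}$ both satisfy $L(1,\chi_j) < c_0 \epsilon q_j^{-\epsilon}$ for some small effective constant $c_0 > 0$ to be determined. Deriving a contradiction will prove the ``at most one exception'' statement and identify $c_0$ with the implied constant of the theorem.

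The key construction is
\[
F(s) = \zeta(s)\,L(s,\chi_1)\,L(s,\chi_2)\,L(s,\chi_1\chi_2).
\]
Since $\chi_1$ and $\chi_2$ are real, expanding $\log F(\sigma)$ via the Euler product yields
\[
\log F(\sigma) = \sum_{p,\,k\ge 1}\frac{(1+\chi_1(p)^k)(1+\chi_2(p)^k)}{k\,p^{k\sigma}} \ge 0 \qquad (\sigma > 1),
\]
so $F(\sigma) \ge 1$ on this range. The character $\chi_1\chi_2$ is non-principal because $\chi_1 \ne \chi_2$, so $F$ is meromorphic on $\mathbb{C}$ with a simple pole only at $s=1$. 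Setting $\sigma = 1+1/\log Y$ and using $\zeta(\sigma) = \log Y + O(1)$, together with the effective upper bound $|L(\sigma,\chi_1\chi_2)| \ll \log(q_1 q_2)$ and the approximation $L(\sigma,\chi_j) = L(1,\chi_j) + O(\log^2 q_j/\log Y)$ (both consequences of the estimates of Section~\ref{lchi}, notably Lemma~\ref{lchil1}), the bound $F(\sigma) \ge 1$ becomes
\[
\bigl(L(1,\chi_1) + O(\log^2 q_1/\log Y)\bigr)\bigl(L(1,\chi_2) + O(\log^2 q_2/\log Y)\bigr)\log(q_1 q_2) \gg \frac{1}{\log Y}.
\]

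Substituting the exceptional hypothesis $L(1,\chi_j) \le c_0 \epsilon q_j^{-\epsilon}$ and choosing $\log Y \asymp \log^2(q_{\max})\,q_{\max}^\epsilon/\epsilon$ to balance the main and error terms, routine manipulation reduces this to a constraint of the form $\log q_{\min} \ll \log\log q_{\max}/\epsilon$. Combined with the effective elementary lower bound $L(1,\chi) \gg q^{-1/2}$ (from Polya--Vinogradov and partial summation), this forces the smaller of the two conductors to fail the exceptional hypothesis once $c_0$ is chosen sufficiently small in terms of $\epsilon$, giving the desired contradiction. The main obstacle is the careful balancing of error terms in the choice of $Y$: making the approximation $L(\sigma,\chi_j) \approx L(1,\chi_j)$ reliable requires $\log Y$ to be large compared to $\log^2 q_j/L(1,\chi_j)$, which under the exceptional hypothesis is already large, correspondingly weakening the lower bound $c/\log Y$ coming from the pole of $\zeta$. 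Making this balance sharp enough to rule out a second exception uniformly across all conductor regimes is the quantitative crux of the argument.
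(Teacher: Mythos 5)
Your construction $F(s)=\zeta(s)L(s,\chi_1)L(s,\chi_2)L(s,\chi_1\chi_2)$ is essentially the same object the paper works with: in the paper's notation $f=\chi_1*\chi_2*\chi_1\chi_2$, and the non-negativity of the Dirichlet coefficients of $1*f$ is exactly your observation that $\log F(\sigma)\ge 0$. But the way you propose to derive a contradiction does not close. Using $F(\sigma)\ge 1$ at a single point $\sigma=1+1/\log Y>1$, after inserting $\zeta(\sigma)\asymp\log Y$, the mean-value bound $L(\sigma,\chi_j)=L(1,\chi_j)+O(\log^2 q_j/\log Y)$, and the exceptional hypothesis $L(1,\chi_j)<c_0\epsilon q_j^{-\epsilon}$, one is led (optimizing over $Y$, say $\log Y\asymp q_{\max}^\epsilon\log^2 q_{\max}/\epsilon$) to needing an inequality of the shape $q_{\min}^\epsilon\gg c_0\,\epsilon\,\log^2 q_{\max}\,\log(q_1q_2)$ in order to contradict $F(\sigma)\ge1$. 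This fails whenever $q_{\max}$ is enormous compared with $q_{\min}$, which is exactly the hard regime: nothing in the hypotheses bounds $q_{\max}$ in terms of $q_{\min}$ and $\epsilon$. Your safety net $L(1,\chi)\gg q^{-1/2}$ does not repair this; for $\epsilon<1/2$ it merely forces $q_{\min}$ to exceed a constant depending on $c_0,\epsilon$, which still leaves $q_{\max}$ free to grow. The ``quantitative crux'' you flag at the end is therefore not a matter of sharper balancing: a single-point lower bound for $F$ at $\sigma>1$ is simply too weak, and no choice of $Y$ or $c_0$ rescues it.

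The missing idea is to exploit a real zero $\beta<1$, not just a value at $\sigma>1$. The paper does this in a fully elementary way via Lemma~\ref{zerol1}: it drops the $\zeta$-factor, works with $L(\sigma,f)=L(\sigma,\chi_1)L(\sigma,\chi_2)L(\sigma,\chi_1\chi_2)$ directly (conditional convergence for $\Re(s)>4/5$ is supplied by $\sum_{n\le x}f(n)\ll x^{4/5}\log x$ from the hyperbola method, so no analytic continuation is invoked), and proves (a) if $L(1-\eta,f)\ge0$ for small $\eta$ then $L(1,f)\gg\eta/Q^{2\eta}$, and (b) $L(\sigma,f)$ has at most one real zero in $[1-c_0/\log Q,1]$. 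The smallness of both $L(1,\chi_1)$ and $L(1,\chi_2)$ forces each of $L(\sigma,\chi_1)$, $L(\sigma,\chi_2)$, hence $L(\sigma,f)$, to vanish near $1$; part (b) forces at least one such zero to satisfy $\eta\gg1/\log q$; part (a) then gives $L(1,f)\gg 1/(q^{\epsilon/2}\log q)$, contradicting the assumed upper bound $L(1,f)\ll c_0^2\epsilon^2\log q/q^{\epsilon}$. Note also that your remark that $F$ is ``meromorphic on $\mathbb{C}$'' invokes analytic continuation, which the paper deliberately avoids as part of its elementary programme; the proof you want should not need it.
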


\begin{proof}[Deduction of Theorem~\ref{siegel} from Theorem~\ref{siegel2}] Fix some $\epsilon\in(0,1)$ and let $\psi_\epsilon$ be the possible exceptional character from Theorem~\ref{siegel2}.
Consider a real, non-principal Dirichlet character $\chi$ modulo $q$, and let $\chi_1$ modulo $q_1$ be the primitive character inducing $\chi$. We have that
$$
L(1,\chi)=L(1,\chi_1)\prod_{p|q}\left(1-\frac{\chi_1(p)}p\right)\ge L(1,\chi_1) \cdot \frac{\phi(q)}q \gg\frac{L(1,\chi_1)}{\log q}.
$$
So
$$
L(1,\chi)\gg\frac1{\log q}\cdot
\begin{cases}L(1,\psi_\epsilon) &\text{if}\ \chi_1=\psi_\epsilon,\cr
\epsilon q^{-\epsilon} &\text{if}\ \chi_1\neq\psi_\epsilon.
\end{cases}
$$
Moreover, Dirichlet's class number formula~\cite[p. 49-50]{Dav} implies that $L(1,\psi_\epsilon)>0$ (see also~\cite[p. 37]{ik} for a different proof of this statement, in the spirit of the proof of Lemma~\ref{zerol1} below). So, in any case, we find that $L(1,\chi)\gg_\epsilon q^{-\epsilon}/\log q$. Since this inequality holds for every $\epsilon\in(0,1)$, Theorem~\ref{siegel} follows.
\end{proof}

Before we give the proof of Theorem~\ref{siegel2}, we state a preliminary lemma. The idea behind its proof can be traced back to~\cite{Pi73,Pi76}. Note that some of its assumptions concern the behavior of $L(s,f)$ inside the critical strip. However, the assumption that $\sum_{n\le x}f(n)\ll x^{4/5}\log x$, together with partial summation, guarantees that $L(s,f)$ converges (conditionally) for all $s$ with $\Re(s)>4/5$. Hence we still use only elementary facts about Dirichlet series and no analytic continuation.
\begin{lemma}\label{zerol1} Let $c\ge1$, $r\in\SN$ and $Q\ge2$. Consider a multiplicative function $f:\SN\to\SR$ such that $0\le(1*f)(n)\le\tau_r(n)$ for all integers $n$, and
$$
\left|\sum_{n\le x}f(n)\right|\le cx^{4/5}\log x\quad(x\ge Q).
$$
\begin{enumerate}
\item  If $L(1-\eta,f)\ge0$ for some $\eta\in[0,1/20]$, then $L(1,f)\gg_{c,r}\eta/Q^{2\eta}$.
\item There is a constant $c_0$ depending at most on $c$ and $r$ such that $L(\sigma,f)$ has at most one zero in the interval $[1-c_0/\log Q,1]$.
\end{enumerate}
\end{lemma}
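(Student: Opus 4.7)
Part (a). The plan is to exploit the non-negativity of $h := 1*f$ (with $h(1) = 1$) through the weighted partial sum
\[
U(x) := \sum_{n \le x} \frac{h(n)}{n^{1-\eta}} \ge 1, \quad x \ge 1.
\]
First, by partial summation from the hypothesis $|M(y)| \le cy^{4/5}\log y$ valid for $y \ge Q$, one derives the tail estimates $\sum_{d \le y} f(d)/d = L(1,f) + O(y^{-1/5}\log y)$ and $\sum_{d \le y} f(d)d^{\eta-1} = L(1-\eta,f) + O(y^{\eta-1/5}\log y)$ for $y \ge Q$, and in particular $L(1-\eta, f) = O_{c,r}(1)$. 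Next, I would expand $h = 1*f$ inside $U(x)$ and apply Dirichlet's hyperbola method with split at $\sqrt{x} \ge Q$, together with the elementary formula $\sum_{e \le y} e^{\eta-1} = y^\eta/\eta + \zeta(1-\eta) + O(y^{\eta-1})$, to obtain for $x \ge Q^2$ the identity
\[
U(x) = \frac{L(1,f)\,x^\eta}{\eta} + \zeta(1-\eta)\,L(1-\eta,f) + O_{c,r}\!\left(\frac{x^{\eta-1/10}\log x}{\eta}\right).
\]
Multiplying by $\eta$ and using $\eta\zeta(1-\eta) = -1 + O(\eta)$, the hypothesis $L(1-\eta,f) \ge 0$ together with $U(x) \ge 1$ gives $L(1,f)\,x^\eta \ge \eta - O_{c,r}(x^{\eta - 1/10}\log x)$. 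Specialising $x = Q^2$ yields $L(1,f) \gg_{c,r} \eta/Q^{2\eta}$ in the main range. For the residual range of very small $\eta$ where the error absorbs $\eta$, I would instead take $x \asymp \eta^{-20}$ (which satisfies $x \ge Q^2$ precisely when $\eta \le Q^{-1/10}$), so that $x^\eta = e^{-20\eta\log\eta}$ is $O(1)$ and $\eta E(x,\eta) = o(\eta)$, giving $L(1,f) \gg \eta \ge \eta/Q^{2\eta}$.

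Part (b). I would argue by contradiction: suppose $L(\sigma,f)$ has two zeros $\sigma_1 < \sigma_2$ in $[1 - c_0/\log Q, 1]$. If $\sigma_2 = 1$, then part (a) applied with $\eta = 1-\sigma_1 > 0$ and the (trivially satisfied) hypothesis $L(\sigma_1) = 0 \ge 0$ gives $L(1,f) \gg \eta/Q^{2\eta} > 0$, contradicting $L(\sigma_2) = L(1) = 0$. Hence $\sigma_2 < 1$ and $L(1) > 0$. Writing $\eta_i = 1 - \sigma_i$, I would Taylor-expand $L$ at $\sigma_2$: Rolle's theorem applied on $[\sigma_1,\sigma_2]$ yields some $\xi$ with $L'(\xi) = 0$, whence $|L'(\sigma_2)| \le |L''|_{\infty}(\sigma_2 - \sigma_1)$; combined with the expansion $L(1) = L'(\sigma_2)\eta_2 + \tfrac{1}{2}L''(\bar\xi)\eta_2^2$ and the standard partial-summation bounds $|L^{(k)}(s,f)| \ll_{c,r}(\log Q)^{r+k+1}$ for $s$ close to $1$, this gives the upper bound $|L(1,f)| \ll_{c,r} c_0^2 (\log Q)^{r+1}$. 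Comparing with the lower bound $L(1,f) \gg_{c,r} \eta_2 e^{-2c_0}$ coming from part (a) forces $\eta_2 \ll_{c,r} c_0^2 (\log Q)^{r+1}$; a bootstrap iteration of the Taylor analysis using this improved smallness of $\eta_2$ then produces an outright contradiction once $c_0$ is taken small enough depending only on $c$ and $r$.

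The central technical obstacle is the sharp error control in the hyperbola identity of part (a): the error must be beaten by the $\eta$ gained from the trivial lower bound $U(x) \ge 1$, which is what drives both the split at $\sqrt{x} \ge Q$ and the adaptive choice of $x$. For part (b), the difficulty is converting the two-zero hypothesis into an upper bound sharp enough to clash with the lower bound from (a); the Taylor--Rolle combination combined with the standard derivative estimates seems to be the right route, but squeezing a contradiction in the intended universal range of $c_0$ appears to require the iterative tightening sketched above.
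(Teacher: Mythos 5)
Your proof of part (a) follows the same route as the paper's: you exploit the non-negativity of $1*f$ through the weighted sum $U(x)=\sum_{n\le x}(1*f)(n)/n^{1-\eta}\ge 1$, apply the hyperbola method with split at $\sqrt{x}$, extract the main term $L(1,f)x^\eta/\eta$, and throw away the $L(1-\eta,f)$-term using the sign of $\zeta(1-\eta)$ (what the paper calls $\gamma_\eta - 1/\eta$, with $\gamma_\eta<1<1/\eta$). The only wrinkle is that you multiply through by $\eta$ too early, turning the inequality $U(x)\ge 1$ into $\eta U(x)\ge \eta$; this forces the error to beat $\eta$ rather than a fixed constant, and that is what creates the artificial case split and the adaptive choice $x\asymp\eta^{-20}$. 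The paper keeps the inequality as $U(x)\ge 1$, notes $\frac{x^\eta}{\eta}L(1,f)\ge 1-O_{c,r}\bigl((\log x)^2/x^{1/10-\eta}\bigr)$, and simply sets $x=c_1 Q^2$ with $c_1=c_1(c,r)$ large enough so that the error is at most $1/2$ (using $\eta\le 1/20$, hence $1/10-\eta\ge 1/20$). This gives $L(1,f)\ge \eta/(2(c_1Q^2)^\eta)\gg_{c,r}\eta/Q^{2\eta}$ uniformly, with no adaptive choice needed. Also note that your stated error $O(x^{\eta-1/10}\log x/\eta)$ is slightly lossy: bounding $\sum_{b\le\sqrt x}b^{\eta-1}$ by $x^{\eta/2}\log x$ rather than $x^{\eta/2}/\eta$ removes the $1/\eta$.

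Your part (b), however, has a genuine gap. You bound $\bigl|L^{(k)}(s,f)\bigr|\ll_{c,r}(\log Q)^{r+k+1}$ near $s=1$ (which is the best available from partial summation against $\tau_{r+1}$), and then use Rolle plus Taylor to get
\[
|L(1,f)|\ll (\log Q)^{r+3}\,(\sigma_2-\sigma_1)\,\eta_2 + (\log Q)^{r+3}\,\eta_2^2.
\]
Comparing with the lower bound $L(1,f)\gg\eta_2$ from part (a) and dividing by $\eta_2$ gives $1\ll (\log Q)^{r+3}\eta_1$, i.e. $\eta_1\gg(\log Q)^{-(r+3)}$. But the hypothesis already says $\eta_1\le c_0/\log Q$, and $c_0/\log Q \gg (\log Q)^{-(r+3)}$ is not a contradiction for large $Q$; it is simply vacuous. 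Equivalently, the bound $\eta_2\ll c_0^2(\log Q)^{r+1}$ that you derive is \emph{weaker} than the input constraint $\eta_2\le c_0/\log Q$, so the ``bootstrap iteration'' does not improve anything at all --- there is no small parameter to iterate on. The polynomial-in-$\log Q$ derivative bounds are simply too coarse for a Taylor--Rolle argument to close; any approach that compares a $(\log Q)$-power to the fixed constant $c_0$ is doomed.

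The paper avoids derivative upper bounds entirely. It forms a second weighted sum $S_2=\sum_{n\le x}(1*f)(n)(\log n)/n^{1-\eta}\ge 0$, evaluates the resulting identity at the special point $x=e^{1/(2\eta)}$ (so that the factor $\eta\log x-1=-1/2$ has a definite sign and the $(\log x)^3/x^{1/10-\eta}$ error is \emph{exponentially} small in $1/\eta$), and combines this with $L(1-\eta,f)=0$ and the lower bound from part (a) to conclude $L'(1-\eta,f)>0$ at any zero $1-\eta\in[1-c_0/\log Q,1]$. The conclusion then comes from a topological argument: since $L'>0$ at each zero, zeros are isolated, and an Intermediate Value Theorem argument between consecutive zeros produces a new zero in between, a contradiction; so $\mathscr{Z}$ has at most one element. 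The key idea you are missing is to obtain \emph{sign} information on $L'$ at a zero (via a second weighted non-negative sum), rather than \emph{size} information from generic derivative bounds.
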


\begin{proof} Before we begin, we record some estimates, which follow by partial summation. For $A_2\ge A_1\ge Q$, we have that
\be\label{sum f(a)}
\sum_{A_1<a\le A_2}\frac{f(a)}{a^{1-\eta}}\ll_c\frac{\log A_1}{A_1^{1/5-\eta}} \quad\text{and}\quad \sum_{A_1<a\le A_2}\frac{f(a)\log a}{a^{1-\eta}}\ll_c\frac{(\log A_1)^2}{A_1^{1/5-\eta}},
\ee
and, for $B\ge1$, we have that
\be\label{1/b}
\sum_{b\le B}\frac1{b^{1-\eta}}=\frac{B^\eta-1}{\eta}+\gamma_\eta+O(B^{\eta-1}),\quad\text{where}\ \gamma_\eta=1-(1-\eta)\int_1^\infty\frac{\{u\}}{u^{2-\eta}}du,
\ee
and
\be\label{logb/b}
\sum_{b\le B}\frac{\log b}{b^{1-\eta}}=\frac{B^\eta\log B}{\eta}-\frac{B^\eta-1}{\eta^2}+\gamma'_\eta+O\left(\frac{\log B}{B^{1-\eta}}\right),\quad\text{where}\ \gamma'_\eta=\int_1^\infty\frac{\{u\}(1-(1-\eta)\log u)}{u^{2-\eta}}du.
\ee

\medskip

(a) For $x\ge Q^2$ we have that
\bes\bsp
S_1&:=\sum_{n\le x}\frac{(1*f)(n)}{n^{1-\eta}}=\sum_{a\le\sqrt{x}}\frac{f(a)}{a^{1-\eta}}\sum_{b\le x/a}\frac1{b^{1-\eta}}+\sum_{b\le\sqrt{x}}\frac1{b^{1-\eta}}\sum_{\sqrt{x}<a\le x/b}\frac{f(a)}{a^{1-\eta}}\\
&=\sum_{a\le\sqrt{x}}\frac{f(a)}{a^{1-\eta}}\left(\frac{(x/a)^\eta-1}\eta+\gamma_\eta+O\left((x/a)^{\eta-1}\right)\right)
+O_c\left(\frac{\log x}{x^{1/10-\eta/2}}\sum_{b\le\sqrt{x}}\frac1{b^{1-\eta}}\right)\\
&=\sum_{a\le\sqrt{x}}\frac{f(a)}{a^{1-\eta}}\frac{(x/a)^\eta-1}\eta+\gamma_\eta L(1-\eta,f)+O_{c,r}\left(\frac{\log^2x}{x^{1/10-\eta}}\right),
\end{split}\ees
by relations~\eqref{sum f(a)} and~\eqref{1/b}, since $|f|=|\mu*(1*f)|\le\tau_{r+1}$. Finally, for $A>\sqrt{x}$ we have that
$$
\sum_{\sqrt{x}<a\le A}\frac{f(a)}{a^{1-\eta}}\frac{(x/a)^\eta-1}\eta
= -\int_{\sqrt{x}}^A\frac{(x/u)^\eta-1}\eta d\left(\sum_{u<a\le A}\frac{f(a)}{a^{1-\eta}}\right)\ll_c\frac{\log^2x}{x^{1/10-\eta}},
$$
by relation \eqref{sum f(a)} and integration by parts. Consequently,
$$
S_1 = \frac{x^\eta}\eta L(1,f) + \left(\gamma_\eta-\frac1\eta\right)L(1-\eta,f)
+O_{c,r}\left(\frac{\log^2x}{x^{1/10-\eta}}\right).
$$
Note that $S_1\ge 1$ because $(1*f)(1)=1$ and $(1*f)(n) \ge0$ for all $n>1$ by assumption. Since $L(1-\eta,f)\ge0$ and $\gamma_\eta<1<1/\eta$ for $\eta\in(0,1)$, we readily find that $L(1,f)\gg\eta/Q^{2\eta}$ by taking $x=c_1Q^2$ for some sufficiently large constant $c_1$ that depends at most on $c$ and $r$.

\medskip

(b) Let $\eta\in[0,1/20]$ and $x\ge Q^2$. We use a similar argument as in the proof of part (a) to compute
\[
S_2:= \sum_{n\le x}\frac{(1*f)(n)\log n}{n^{1-\eta}}=\sum_{a\le\sqrt{x}}\frac{f(a)}{a^{1-\eta}}\sum_{b\le x/a}\frac{\log(ab)}{b^{1-\eta}}+\sum_{b\le\sqrt{x}}\frac1{b^{1-\eta}}\sum_{\sqrt{x}<a\le x/b}\frac{f(a)\log(ab)}{a^{1-\eta}}.
\]
Relations~\eqref{1/b} and~\eqref{logb/b} imply that, for $1\le a\le x$, we have that
\[
\sum_{b\le x/a}\frac{\log(ab)}{b^{1-\eta}} = (\log a)\sum_{b\le x/a}\frac{1}{b^{1-\eta}} + \sum_{b\le x/a}\frac{\log b}{b^{1-\eta}}= g(a) + O\left(\frac{\log(2x)}{(x/a)^{1-\eta}}\right),
\]
where
\begin{align}
g(a)
&= (\log a)\left(\frac{(x/a)^\eta-1}{\eta}+\gamma_\eta\right)+\frac{\eta(x/a)^\eta\log(x/a) - (x/a)^\eta+1}{\eta^2}+\gamma_\eta' \label{g_x(a)1}\\
&= (\log x)\frac{(x/a)^\eta}{\eta}+(\log a)\left(\gamma_\eta-\frac1\eta\right) - \frac{(x/a)^\eta-1}{\eta^2}+\gamma_\eta' \label{g_x(a)2}.
\end{align}
By~\eqref{g_x(a)1}, we find that
\be\label{g_x(a)}
g(a)\ll \log^2(2a) + (x/a)^\eta\log^2x,
\ee
uniformly in $a\ge1$, $x\ge1$ and $\eta\in[0,1/20]$. Moreover, differentiating~\eqref{g_x(a)2} with respect to $a$, we deduce that
\be\label{g_x'(a)}\begin{split}
g'(a)
&= -\frac{(\log x)(x/a)^\eta}{a}+\frac1a\left(\gamma_\eta-\frac1\eta\right)+\frac{(x/a)^\eta}{\eta a} \\
&= -\frac{(\log x)(x/a)^\eta}{a}+\frac{\gamma_\eta}a+\frac{(x/a)^\eta-1}{\eta a} \ll\frac{\log(2a)+(x/a)^\eta\log x}{a},
\end{split}\ee
uniformly in $a\ge1$, $x\ge1$ and $\eta\in[0,1/20]$. Therefore
\bes\bsp
S_2&=\sum_{a\le\sqrt{x}}\frac{f(a)}{a^{1-\eta}}\left(g(a)+O\left(\frac{\log(x/a)}{(x/a)^{1-\eta}}\right)\right)
+O_c\left(\frac{(\log x)^3}{x^{1/10-\eta}}\right)\\
&=\sum_{a\le\sqrt{x}}\frac{f(a)g(a)}{a^{1-\eta}}+O_{c,r}\left(\frac{(\log x)^3}{x^{1/10-\eta}}\right)
= \sum_{a=1}^\infty\frac{f(a)g(a)}{a^{1-\eta}}+O_{c,r}\left(\frac{(\log x)^3}{x^{1/10-\eta}}\right)
\end{split}\ees
for all $x\ge Q^2$, where the first equality is a consequence of relation \eqref{sum f(a)} and the third equality follows by relations \eqref{g_x(a)} and \eqref{g_x'(a)} together with partial summation. Furthermore, we have that
\[
\sum_{a=1}^\infty\frac{f(a)g(a)}{a^{1-\eta}} = \frac{x^\eta(\eta\log x-1)}{\eta^2}L(1,f)+\left(\frac1\eta-\gamma_\eta\right)L'(1-\eta,f)+\left(\frac1{\eta^2}+\gamma'_\eta\right)L(1-\eta,f).
\]
Since $S_2\ge0$, by our assumption that $1*f\ge0$, we conclude that
\be\label{L'(1,f)}
0\le \frac{x^\eta(\eta\log x-1)}{\eta^2}L(1,f)+\left(\frac1\eta-\gamma_\eta\right)L'(1-\eta,f)+\left(\frac1{\eta^2}+\gamma'_\eta\right)L(1-\eta,f) +O_{c,r}\left(\frac{(\log x)^3}{x^{1/10-\eta}}\right).
\ee

Now, assume that $L(1-\eta,f)=0$ for some $\eta\in[0,c_0/\log Q]$, where $c_0$ a small constant to be chosen later, then setting $x=e^{1/(2\eta)}$ in~\eqref{L'(1,f)}, we find that
\[
0\le -\frac{e^{1/2}}{2\eta^2}L(1,f)+\left(\frac1\eta-\gamma_\eta\right)L'(1-\eta,f)+O_{c,r}\left(\frac{(1/\eta)^3}{e^{1/(20\eta)}}\right).
\]
Also, part (a) implies that $L(1,f)\gg\eta/Q^{2\eta}\gg\eta$. So there is some constant $c_2=c_2(c,r)$ such that
\[
\left(\frac1\eta-\gamma_\eta\right)L'(1-\eta,f)\ge \frac{c_2}{\eta} + O_{c,r}\left(\frac{(1/\eta)^3}{e^{1/(20\eta)}}\right) \ge \frac{c_2}{2\eta},
\]
provided that $c_0$ is large enough. Since $\gamma_\eta<1<1/\eta$ for $\eta\in(0,1)$, we conclude that $L'(1-\eta,f)>0$.

We are now ready to complete the proof of part (b): let
\[
\mathscr{Z}=\{\beta\in[1-c_0/\log Q,1]:L(\beta,f)=0\}
\]
and note that $\mathscr{Z}$ is a closed set under the Euclidean topology. Since $L'(\beta,f)>0$ whenever $\beta\in \mathscr{Z}$ from the discussion in the above paragraph, the set $\mathscr{Z}$ has no accumulation points ($L'(\sigma,f)$ is continuous for $\sigma>4/5$, so around every element of $\mathscr{Z}$ there is an open neighborhood where $L'(\sigma,f)>0$ and, as a result, where $L(\sigma,f)$ is strictly increasing\footnote{This is a consequence of the Mean Value Theorem, but it can be also shown directly: Integrating term by term, we have that $\sum_{n=1}^Nf(n)n^{-\sigma_2}-\sum_{n=1}^Nf(n)n^{-\sigma_1} = -\int_{\sigma_1}^{\sigma_2}\sum_{n=1}^N f(n)(\log n)n^{-u}du$. So letting $N\to\infty$ yields the formula $L(\sigma_2,f)-L(\sigma_1,f) = \int_{\sigma_1}^{\sigma_2}L'(\sigma,f) d\sigma$ for $\sigma_2>\sigma_1>4/5$, since the series $\sum_{n=1}^Nf(n)(\log n)/n^\sigma$ converges uniformly in $[\sigma_1,\sigma_2]$, as it can be seen by partial summation. Hence if $L'(\sigma,f)>0$ for all $\sigma\in[\sigma_1,\sigma_2]$, then $L(\sigma_2,f)>L(\sigma_1,f)$.}). Now, assume that $\beta_1<\beta_2$ are two consecutive elements of $\mathscr{Z}$, that is to say, there is no other element of $\mathscr{Z}$ in $(\beta_1,\beta_2)$. Then $L(\sigma,f)$ is strictly increasing in a neighborhood of $\beta_1$ and of $\beta_2$. In particular, $L(\sigma,f)$ is positive in a neighborhood to the right of $\beta_1$ and negative in a neighborhood to the left of $\beta_2$. But then the Intermediate Value Theorem implies that there must another zero of $L(\sigma,f)$ in the interval $(\beta_1,\beta_2)$, which contradicts the choice of $\beta_1$ and $\beta_2$. This shows that $\mathscr{Z}$ can contain at most one element, thus completing the proof of the lemma.
\end{proof}

\begin{proof}[Proof of Theorem~\ref{siegel2}] Fix $\epsilon\in(0,1]$ and let $\mathscr{C}$ be the set of all real, non-principal, primitive Dirichlet characters. Note that if $\chi\pmod q$ is an element of $\mathscr{C}$ for which $L(\sigma,\chi)$ has no zeroes in $[1-\epsilon/40,1]$, then $L(1-\epsilon/40,\chi)>0$, by continuity. So Lemma~\ref{zerol1}(a) with $Q=q^{5/4}$ implies that $L(1,\chi)\ge c_1\epsilon /q^{\epsilon}$, for some absolute constant $c_1>0$.

Now, assume that there are two distinct elements of $\mathscr{C}$, say $\chi_1\pmod{q_1}$ and $\chi_2\pmod{q_2}$, such that $L(1,\chi_j)<c_2\epsilon/q_j^\epsilon$ for $j\in\{1,2\}$, where $c_2$ is some constant in $(0,c_1]$ to be chosen later. Then the discussion in the above paragraph implies that $L(\sigma,\chi_1)$ and $L(\sigma,\chi_2)$ both vanish somewhere in $[1-\epsilon/40,1]$, say at $1-\eta_1$ and $1-\eta_2$, respectively. Set $q=\max\{q_1,q_2\}$ and $f=\chi_1*\chi_2*\chi_1\chi_2$. We shall apply Lemma~\ref{zerol1} to $f$ but, first, we need to bound its partial sums. We do so by Dirichlet's hyperbola method: Note that
\bes\bsp
\sum_{n\le x}(\chi_1*\chi_2)(n) &= \sum_{a\le\sqrt{x}}\chi_1(a)\sum_{b\le x/a}\chi_2(b)+\sum_{b\le\sqrt{x}}\chi_2(b)\sum_{\sqrt{x}<a\le x/b}\chi_1(a)
\ll \sum_{a\le\sqrt{x}}q_2+\sum_{b\le\sqrt{x}}q_1 \le2q\sqrt{x}.
\end{split}\ees
So for $x\ge q^{10}$ we have that
\bes\bsp
\sum_{n\le x}f(n) & = \sum_{k\le (x/q)^{2/3} } (\chi_1*\chi_2)(k) \sum_{m\le x/k}\chi_1\chi_2(m) + \sum_{m\le x^{1/3}q^{2/3}}\chi_1\chi_2(m)\sum_{(x/q)^{2/3}<k\le x/m}(\chi_1*\chi_2)(k) \\
&\ll \sum_{k\le (x/q)^{2/3}}\tau_2(k)q^2 + \sum_{m \le x^{1/3}q^{2/3}} q\sqrt{x/m} \ll q^{4/3}x^{2/3}\log x\le x^{4/5}\log x,
\end{split}\ees
that is to say, the hypotheses of Lemma~\ref{zerol1} are satisfied with $Q=q^{10}$ and $r=4$. Since $L(\sigma,f)=L(\sigma,\chi_1)L(\sigma,\chi_2)L(\sigma,\chi_1\chi_2)$ for $\sigma>2/3$, a formula which can be proven using Dirichlet's hyperbola method in a similar fashion as above, we deduce that $L(1-\eta_1,f)=L(1-\eta_2,f)=0$. So Lemma~\ref{zerol1}(b) implies that $\eta:=\max\{\eta_1,\eta_2\}\gg1/\log q$. Moreover, Lemma~\ref{zerol1}(a) and the inequality $e^x\ge x^2/2$, $x>0$, imply that
\be\label{siegel e1}
L(1,f)\gg\frac{\eta}{q^{20\eta}}\ge\frac{\eta}{q^{\epsilon/2}}\gg\frac1{q^{\epsilon/2}\log q}\ge\frac{\epsilon^2\log q}{4q^{\epsilon}}.
\ee
However, our assumption on $\chi_1$ and $\chi_2$ and the standard bound $L(1,\chi_1\chi_2)\ll\log q$,
which follows by partial summation, imply that
$$
L(1,f)=L(1,\chi_1)L(1,\chi_2)L(1,\chi_1\chi_2)\ll \frac{c_2\epsilon}{q_1^\epsilon}\frac{c_2\epsilon}{q_2^\epsilon}(\log q)\le\frac{c_2^2\epsilon^2\log q}{q^\epsilon},
$$
which contradicts~\eqref{siegel e1} if $c_2$ is chosen to be small enough. So we conclude that there cannot be $\chi_1\pmod{q_1}$ and $\chi_2\pmod{q_2}$, distinct elements of $\mathscr{C}$, such that $L(1,\chi_j)<c_2\epsilon/q_j^\epsilon$ for both $j=1$ and $j=2$. This completes the proof of Theorem~\ref{siegel2}.
\end{proof}

%%%%%%%%%%%%%%%%%%%%%%%%%%%%%%%%%%%%%%%%%%%%%%%%%%%%%%%%%%%%%%%%%%%%%%%%%%%%%%%%%%%%%%%%%%%%%%%%%%%%%%%%%%%%%%%%%%%%%%%%%%%%
%%%%%%%%%%%%%%%%%%%%%%%%%%%%%%%%%%%%%%%%%%%%%%%%%%%%%%%%%%%%%%%%%%%%%%%%%%%%%%%%%%%%%%%%%%%%%%%%%%%%%%%%%%%%%%%%%%%%%%%%%%%%
%%%%%%%%%%%%%%%%%%%%%%%%%%%%%%%%%%%%%%%%%%%%%%%%%%%%%%%%%%%%%%%%%%%%%%%%%%%%%%%%%%%%%%%%%%%%%%%%%%%%%%%%%%%%%%%%%%%%%%%%%%%%
%%%%%%%%%%%%%%%%%%%%%%%%%%%%%%%%%%%%%%%%%%%%%%%%%%%%%%%%%%%%%%%%%%%%%%%%%%%%%%%%%%%%%%%%%%%%%%%%%%%%%%%%%%%%%%%%%%%%%%%%%%%%
%%%%%%%%%%%%%%%%%%%%%%%%%%%%%%%%%%%%%%%%%%%%%%%%%%%%%%%%%%%%%%%%%%%%%%%%%%%%%%%%%%%%%%%%%%%%%%%%%%%%%%%%%%%%%%%%%%%%%%%%%%%%

\section{Proof of Theorem~\ref{pntap}}\label{proofs}

As we mentioned in the introduction, we shall deduce Theorem~\ref{pntap} from the following more general result.

\begin{theorem}\label{pntap2} For $x\ge1$ and $(a,q)=1$ we have that
\begin{align*}
\sum_{\substack{n\le x\\n\equiv a\pmod q}}\Lambda(n)
=\frac x{\phi(q)}& + O\left(xe^{-c_1(\log x)^{3/5}(\log\log x)^{-1/5}}+ \sqrt{\log x}\left( x^{1-\frac{c_2}{\log q} } + \frac{\tau_2(q) x^{1-c_3\eta(q)} } {\phi(q)} \right) \right),
\end{align*}
where $\eta(q)=\min\left\{ L_q(1,\chi) / \log(3q) :\chi\ \text{real and non-principal character mod}\ q\right\}$.
\end{theorem}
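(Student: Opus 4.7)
The plan is to combine character orthogonality with the derivative bounds of Section~\ref{lchi}. By orthogonality,
\[
\sum_{\substack{n\le x\\n\equiv a\,(q)}}\Lambda(n)=\frac{1}{\phi(q)}\sum_{\chi\,(\mathrm{mod}\,q)}\bar\chi(a)\,\psi(x,\chi),\qquad \psi(x,\chi):=\sum_{n\le x}\Lambda(n)\chi(n),
\]
so the main task is to estimate $\psi(x,\chi)$ uniformly in $\chi$. The principal character contributes $\psi(x,\chi_0)=\psi(x)+O((\log q)\log x)$, which reduces its piece to the ordinary prime number theorem (the $q=1$ case of the same statement).

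For an individual $\chi$ I would use an elementary smoothed Perron-type identity (the ``Fourier inversion'' advertised in the introduction)
\[
\sum_n\Lambda(n)\chi(n)\,\theta(n/x)=\frac{1}{2\pi i}\int_{\Re s=1+\alpha}\!\Bigl(-\frac{L'}{L}(s,\chi)\Bigr)\widetilde\theta(s)\,x^s\,ds,
\]
with $\theta$ a smooth bump and $\widetilde\theta$ its rapidly-decreasing Mellin transform. I would then Taylor-expand $-L'/L(s,\chi)$ around a base point $s_0=1+\alpha_0+it$ to order $K$, feeding in Lemma~\ref{lchil3}, whose bound has the shape $|(L'/L)^{(k-1)}(s,\chi)+\delta(\chi)(-1)^{k-1}(k-1)!/(s-1)^{k}|\ll(ck\log(qV_t)/M)^{k}$ with $M:=\delta(\chi)+(1-\delta(\chi))|L_{qV_t}(s,\chi)|$. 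The polar term $(s-1)^{-k}$ integrates back to exactly $\delta(\chi)x$ via the Mellin identity, giving the main term for the principal character. Each Taylor monomial contributes $O(x^{1+\alpha_0}(\log x)^{-j})$ in the $j$-th order, so choosing $\alpha_0\asymp K\log(qV_T)/(M\log x)$ and $K\asymp M\log x/\log(qV_T)$ yields $|\psi(x,\chi)-\delta(\chi)x|\ll x\exp(-cM\log x/\log(qV_T))$, up to a $\sqrt{\log x}$ factor absorbed in the Taylor truncation.

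The three error terms in the statement correspond to three regimes on $(\chi,t)$. For non-real, non-principal $\chi$, Lemma~\ref{lchil2} gives $M\asymp 1$, and balancing $\log V_T\asymp(\log T)^{2/3}(\log\log T)^{1/3}$ against $\log x$ produces the Vinogradov--Korobov exponent $(\log x)^{3/5}(\log\log x)^{-1/5}$; when no such saving is available, one retreats to the classical $M\asymp 1/\log q$, obtaining $x^{1-c_2/\log q}\sqrt{\log x}$. For real non-principal $\chi$ with $|t|$ near $0$, Lemma~\ref{lchil2} only supplies $M\gg L_q(1,\chi)/\log(3q)\ge\eta(q)$, yielding $x^{1-c_3\eta(q)}$; summing over the at most $\tau_2(q)$ real characters modulo $q$ and dividing by $\phi(q)$ then recovers the final term.

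The main obstacle is the joint optimisation of $(\alpha_0,K,T)$: one must pick $T$ so that $\log V_T$ is just large enough to render $|t|>T$ negligible on the Perron contour, yet small enough that $ck\log(qV_T)/(M\alpha_0\log x)<1$ for all $k\le K$. A secondary subtlety is the clean extraction of the main term from the pole $(s-1)^{-k}$ of Lemma~\ref{lchil3}: its subtraction must be carried out uniformly in $k$ before the Taylor summation, so that the inverse Mellin re-telescopes to exactly $\delta(\chi)x$ without polynomial-in-$\log x$ loss. Finally, keeping all estimates effective uniformly in $q\le(\log x)^A$ and in the Siegel-type quantity $\eta(q)$ requires tracking the dependence of the constants through Lemmas~\ref{lchil1}--\ref{lchil3}.
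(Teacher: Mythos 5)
Your overall scaffolding---orthogonality, Perron, feeding in Lemma~\ref{lchil3}, and splitting into three regimes according to the size of $M(\chi)$---matches the paper's, and you've correctly identified that the $\tau_2(q)$ factor comes from counting real characters. But the central mechanism, the step that converts derivative bounds on $L'/L$ into a bound on $\psi(x,\chi)-\delta(\chi)x$, is not viable as you describe it, and this is where the paper's proof does something genuinely different. You propose to Taylor-expand $-L'/L(s,\chi)$ around a base point $s_0=1+\alpha_0+it$ to order $K$ and then integrate term by term, with the polar piece $(s-1)^{-k}$ ``integrating back to $\delta(\chi)x$.'' This does not work: the Taylor expansion at $s_0$ is a local approximation valid only in a disc of radius comparable to $M/\log(qV_t)$, whereas the Perron integral is over an entire vertical line, and there is no contour on which the expansion is globally valid; moreover, extracting the main term from the pole would require a contour shift past $s=1$, which is exactly the analytic-continuation device the paper is trying to avoid. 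It's also unclear why the $j$-th Taylor monomial should contribute $O(x^{1+\alpha_0}(\log x)^{-j})$; there is no such clean decay.

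The paper sidesteps all of this with a cleaner device. It subtracts the main term \emph{at the level of arithmetic functions}, setting $\Lambda_\chi(n)=\chi(n)\Lambda(n)-\delta(\chi)$ and $F_\chi(s)=-\frac{L'}{L}(s,\chi)-\delta(\chi)\zeta(s)$, so the pole at $s=1$ cancels and $F_\chi$ is regular. Then, rather than Taylor-expanding, it exploits the exact identity
\[
\sum_{n\le y}\Lambda_\chi(n)(\log n)^{k-1}\log\frac yn
=\frac{(-1)^{k-1}}{2\pi i}\int_{\Re(s)=1+\frac1{\log y}}F_\chi^{(k-1)}(s)\,\frac{y^s}{s^2}\,ds,
\]
i.e.\ $F_\chi^{(k-1)}$ is the Dirichlet series of $\Lambda_\chi(n)(-\log n)^{k-1}$, and the $1/s^2$ weight produces the smooth $\log(y/n)$ factor. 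One then applies the pointwise bound on $F_\chi^{(k-1)}$ from~\eqref{pf e1} directly on the contour, where the factor $1/(t^2+1)$ makes the $t$-integral converge without any truncation parameter $T$. The $\log(y/n)$ weight is removed by differencing $y\mapsto x$ and $y\mapsto x-\Delta(x)$, and the $(\log n)^{k-1}$ weight by partial summation; the final optimisation is over the single integer $k$. The Korobov--Vinogradov exponent then emerges from the integral $\int_1^\infty u^{2k/3}(\log u)^{k/3}e^{-u}\,du$, not from a separate balance of $\log V_T$ against $\log x$. You should adopt this subtraction-then-differentiate framework rather than attempt a local Taylor expansion.
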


\begin{proof}[Deduction of Theorem~\ref{pntap} from Theorem~\ref{pntap2}]
Fix $A\ge1$. Note that if $\chi$ is real and non-principal, then
\begin{align*}
\frac{L_q(1,\chi)}{\log q}  =\lim_{\sigma\to1^+} \left\{\frac{L(\sigma,\chi)}{\log q} \prod_{p\le q} \left(1-\frac{\chi(p)}{p^\sigma}\right)\right\}
 =\frac{L(1,\chi)}{\log q} \prod_{p\le q} \left( 1-\frac{\chi(p)}p \right)
\gg \frac{L(1,\chi)} {\log^2q} &\gg_A  q^{-\frac1{3A}}.
\end{align*}
by Mertens's estimate and Theorem~\ref{siegel}. So if $x\ge e^{q^{1/A}}$, then we find that $\eta(q)\gg_A q^{-1/(3A)}\ge(\log x)^{-1/3}$. Combining this estimate with Theorem~\ref{pntap2} completes the proof of Theorem~\ref{pntap}.
\end{proof}

\begin{proof}[Proof of Theorem~\ref{pntap2}] For every $x\ge1$ and $(a,q)=1$, the orthogonality of characters implies that
\be\label{proofe4}
\sum_{\substack{n\le x\\n\equiv\,a\,(\text{mod}\ q)}}\Lambda(n)-\frac x{\phi(q)} =
\frac1{\phi(q)}\sum_{\chi\ (\text{mod}\ q)}\overline{\chi}(a)\sum_{n\le x}(\chi(n)\Lambda(n)-\delta(\chi)) + O\left(\frac 1{\phi(q)}\right).
\ee
Fix for the moment a character $\chi$ modulo $q$ and set $\Lambda_\chi(n)=\chi(n)\Lambda(n)-\delta(\chi)$ and
$$
F_\chi(s)=\sum_{n=1}^\infty\frac{\Lambda_\chi(n)}{n^s}=-\frac{L'}{L}(s,\chi)-\delta(\chi)\zeta(s).
$$
We claim that, for $k\in\SN$ and $s=\sigma+it$ with $\sigma>1$ and $t\in\SR$, we have that
\be\label{pf e1}
F_\chi^{(k-1)}(s) \ll \begin{cases} (c_4k\log(qV_t))^k &\text{if}\ |t|\ge1/\log(3q),\cr
\displaystyle (c_4k\log(3q))^k M(\chi)^{-k}&\text{otherwise},
\end{cases}
\ee
where
$$
M(\chi)=
\begin{cases}
L_q(1,\chi)&\text{if}\ \chi\ \text{is real, non-principal},\cr
1&\text{else}.
\end{cases}
$$
Indeed, relation~\eqref{lchil1 e1} with $y=3/2$, $q=1$, $\chi(n)=1$ for all $n$, and $k-1$ in place of $k$, implies that
$$
\left|\zeta^{(k-1)}(s)+\frac{(-1)^{k}(k-1)!}{(s-1)^{k}}\right|\ll k!(c_5\log V_t)^k.
$$
Together with Lemma~\ref{lchil3}, this yields the estimate
$$
F^{(k-1)}_\chi(s)\ll \left(\frac{c_6k\log(qV_t)}{\delta(\chi)+(1-\delta(\chi))|L_{qV_t}(s,\chi)|}\right)^k.
$$
Since $V_t\asymp1$ for $|t|\le1/\log(3q)$, this reduces~\eqref{pf e1} to showing that
\be\label{pf e2}
\delta(\chi)+(1-\delta(\chi))|L_{qV_t}(s,\chi)|\asymp
\begin{cases}
1 & \text{if}\ |t|\ge1/\log(3q),\cr
M(\chi) &\text{otherwise}.
\end{cases}
\ee
If, now, $|t|\ge1/\log(3q)\ge1/(3\log(qV_t))$ or $\chi$ is complex, then $|L_{qV_t}(s,\chi)|\asymp1$ by Lemma~\ref{lchil2}, so~\eqref{pf e2} follows.
Also, if $|t|\le1/\log(3q)$ and $\chi$ is principal, that is to say, $\delta(\chi)=1$, then we have trivially that $\delta(\chi)+(1-\delta(\chi))|L_{qV_t}(s,\chi)|=1=M(\chi)$, so~\eqref{pf e2} holds in this case too. Finally, if $|t|\le1/\log(3q)\le1$ and $\chi$ is real and non-principal, then $1.5\le V_0\le V_t\le V_1<3$. In particular, $|t|\le1/\log(qV_t)$, and thus Lemma~\ref{lchil2} implies that
$$
\delta(\chi)+(1-\delta(\chi))|L_{qV_t}(s,\chi)|=|L_{qV_t}(s,\chi)|\gg L_{qV_t}(1,\chi).
$$
Since $V_t\in[1.5,3]$, a continuity argument implies that
\be\label{pf e10}\bsp
L_{qV_t}(1,\chi) =\lim_{\sigma\to1^+}\left\{ L_q(\sigma,\chi) \prod_{q<p\le qV_t} \left( 1-\frac{\chi(p)}{p^\sigma} \right) \right\}
&= L_q(1,\chi) \prod_{q<p\le qV_t} \left( 1-\frac{\chi(p)}{p} \right)\\
& \gg L_q(1,\chi),
\end{split}\ee
which shows~\eqref{pf e2} in this last case too. This completes the proof of~\eqref{pf e1}.

Next, for every integer $k\ge3$ and for every real number $y\ge2$, we apply relation~\eqref{pf e1} to get that
\be\label{proofe1}\bsp
\sum_{n\le y}\Lambda_\chi(n)(\log n)^{k-1}\log\frac yn
&=\frac{(-1)^{k-1}}{2\pi i} \int_{\Re(s)=1+\frac1{\log y}} F_\chi^{(k-1)}(s) \frac{y^s}{s^2}  ds\\
&\ll y \int_{|t|\ge\frac1{\log(3q)}} \frac{ (c_4k\log(qV_t))^k }{t^2+1}dt + \frac{y}{\log(3q)}\left(\frac{c_4k\log(3q)}{M(\chi)} \right)^{k}.
\end{split}\ee
Moreover, we have that
\be\label{pf e3}\bsp
\int_{|t|\ge\frac1{\log(3q)}} \frac{ (c_4k\log(qV_t))^k }{t^2+1} dt & \le\int_{\SR} \frac{(2c_4k\log q)^k+(2c_4k\log V_t)^k }{t^2+1}dt\\
&\ll (c_7k\log(3q))^k+(c_7k)^k\int_e^\infty\frac{(\log t)^{2k/3}(\log\log t)^{k/3}}{t^2} dt \\
& = (c_7k\log(3q))^k+(c_7k)^k\int_1^\infty u^{2k/3}(\log u)^{k/3} e^{-u}du.
\end{split}\ee
The function $u\to \log(u^{2k/3}(\log u)^{k/3}e^{-u/2})$ is concave for $u>1$ and its maximum occurs when $u\asymp k$. So
$$
\int_1^\infty  u^{2k/3}(\log u)^{k/3} e^{-u} du\le (c_8k)^{2k/3}(\log(c_8k))^{k/3}\int_1^\infty  e^{-u/2}du
\ll(c_8k)^{2k/3}(\log(c_8k))^{k/3}
$$
for some $c_8>1$. The above estimate, together with relations~\eqref{proofe1} and~\eqref{pf e3}, implies that
\be\label{pf e4}\begin{split}
\sum_{n\le y}\Lambda_\chi(n)(\log n)^{k-1}\log\frac yn &\ll  y (c_9k^5\log k)^{k/3}+ y (c_9k\log q)^k + \frac{y}{\log(3q)} \left(\frac{c_9k\log(3q)}{M(\chi)} \right)^{k} \\
& \le y (c_9k^{5/3}(\log k)^{1/3})^k+ y \left(\frac{c_{10}k\log(3q)}{M(\chi)} \right)^{k}
\end{split}\ee
for some $c_{10}\ge c_9\ge1$, since $M(\chi)\ll1$, trivially if $\chi$ is complex or principal and by relation~\eqref{lchil1 e2} with $k=0$ and $\epsilon=1/2$ otherwise.

Now, set
$$
\Delta(x) = x\sqrt{\log x}\left\{ \left( \frac{c_9k^{5/3}(\log k)^{1/3}} {\log x} \right)^{k/2}
+ \left( \frac{c_{10}k\log(3q)} {M(\chi)\log x} \right)^{k/2} \right\}
$$
and note that $\Delta(x)\ge\sqrt{x}$, since $c_9k^{5/3}(\log k)^{1/3}\ge k^{5/3}(\log 3)^{1/3}>k\ge x^{-1/k}\log x$. We claim that
\be\label{proofe2a}
\sum_{n\le x}\Lambda_\chi(n)(\log n)^{k-1}\ll\Delta(x)(\log x)^{k-1}\quad(x\ge4).
\ee
If $\Delta(x)>x/2$, then~\eqref{proofe2a} holds trivially. So assume that $\Delta(x)<x/2$. Applying~\eqref{pf e4} for $y=x$ and $y=x-\Delta(x)$ and subtracting one inequality from the other completes the proof of~\eqref{proofe2a}. Relation \eqref{proofe2a} and partial summation imply that
\bes\bsp
\sum_{n\le x}\Lambda_\chi(n)&=O(\sqrt{x})+\int_{\sqrt{x}}^x\frac1{(\log t)^{k-1}}d\left(\sum_{n\le t}\Lambda_\chi(n)(\log n)^{k-1}\right) \ll 2^k\Delta(x)\quad(x\ge16).
\end{split}\ees
If $(\log x)M(\chi)/\log(3q) \ge 8c_{10}$ and $x$ is large enough, then choosing
\[
k= \min \left\{ \frac{ (\log x)^{3/5} }{ 2c_9(\log\log x)^{1/5} }, \frac{(\log x) M(\chi) }{ 2c_{10} \log(3q) } \right\}
\]
yields the estimate
\be\label{pf e20}
\sum_{n\le x}\Lambda_\chi(n)\ll xe^{-c_1(\log x)^{3/5}(\log\log x)^{-1/5}} + x^{1-c_2 M(\chi)/\log(3q)} \sqrt{\log x} .
\ee
This bound holds trivially when $(\log x)M(\chi)/\log(3q)<8c_{10}$ or when $x$ is small as well. Inserting~\eqref{pf e20} into~\eqref{proofe4} completes the proof of the theorem, since there are at most $2\tau_2(q)$ real characters modulo $q$ (for every $d|q$, there at most 2 real primitive characters modulo $q$ of conductor $d$~\cite[Chapter 5]{Dav}).
\end{proof}

As a conclusion, perhaps it is worth noticing that, arguing along the lines of this paper and using some ideas going back to Hal\'asz\footnote{The factor $\sqrt{\log x}$ can be removed by the argument leading to Theorem 1.7 in~\cite{kou}. Finally, Theorem 2.1 in~\cite{kou} and Lemma~\ref{triangle} above can be used to show that, with at most one exception, we have that $L_q(1,\chi)\gg1$ for all real, non-principal characters $\chi$ mod $q$. This allows us to remove the factor $\tau_2(q)$ too.}, it is possible to prove a stronger form of Theorem~\ref{pntap2} with $x^{1-c_4/\log q}+ x^{1-c_5\cdot \eta(q)}/\phi(q)$ in place of $ \sqrt{\log x}\,(x^{1-c_2/\log q} + \tau_2(q) x^{1-c_3\cdot \eta(q)}/\phi(q) )$. Furthermore, the size of $\eta(q)$ can be related to the location of a potential {\it Siegel zero} \cite[Theorem 2.4]{kou}. In particular, it is possible to show that $\eta(q)\asymp1/\log q$ if there are no Siegel zeroes for any characters modulo $q$. So using these elementary methods, we obtain a result which is analogous to the current state of the art on the counting function of primes in an arithmetic progression, originally achieved using the machinery of Complex Analysis. However, showing these improved results is considerably more technical and we have chosen not to include them in this paper in order to keep the presentation simpler. We refer the interested reader to~\cite{kou} for more details.

%%%%%%%%%%%%%%%%%%%%%%%%%%%%%%%%%%%%%%%%%%%%%%%%%%%%%%%%%%%%%%%%%%%%%%%%%%%%%%%%%%%%%%%%%%%%%%%%%%%%%%%%%%%%%%%%%%%%%%%%%%%%
%%%%%%%%%%%%%%%%%%%%%%%%%%%%%%%%%%%%%%%%%%%%%%%%%%%%%%%%%%%%%%%%%%%%%%%%%%%%%%%%%%%%%%%%%%%%%%%%%%%%%%%%%%%%%%%%%%%%%%%%%%%%
%%%%%%%%%%%%%%%%%%%%%%%%%%%%%%%%%%%%%%%%%%%%%%%%%%%%%%%%%%%%%%%%%%%%%%%%%%%%%%%%%%%%%%%%%%%%%%%%%%%%%%%%%%%%%%%%%%%%%%%%%%%%
%%%%%%%%%%%%%%%%%%%%%%%%%%%%%%%%%%%%%%%%%%%%%%%%%%%%%%%%%%%%%%%%%%%%%%%%%%%%%%%%%%%%%%%%%%%%%%%%%%%%%%%%%%%%%%%%%%%%%%%%%%%%
%%%%%%%%%%%%%%%%%%%%%%%%%%%%%%%%%%%%%%%%%%%%%%%%%%%%%%%%%%%%%%%%%%%%%%%%%%%%%%%%%%%%%%%%%%%%%%%%%%%%%%%%%%%%%%%%%%%%%%%%%%%%

\section*{Acknowledgements}

I would like to thank Andrew Granville for many fruitful conversations on the subject of multiplicative functions, and for several comments and suggestions, particularly, for drawing my attention to the proof of the prime number theorem given in~\cite[pp. 40-42]{ik}. Also, I would like to thank Kevin Ford for pointing out~\cite[Corollary 2A]{Fo} to me, and G\'erald Tenenbaum for some helpful comments. Furthermore, I would like to acknowledge the careful reading of the paper by the anonymous referee, who made several helpful suggestions for improving the exposition, exposed some inaccuracies, and pointed out Tatuzawa's paper~\cite{Ta}.

Following the release of a preprint of an earlier version of this paper, Andrew Granville and Eric Naslund independently communicated to me proofs of Lemma~\ref{derlemma} using a generating series argument that also improved the previous version of Lemma~\ref{derlemma} to the current one (the constant 2 in the numerator was 8 before). The proof of Lemma~\ref{derlemma} given here was inspired by these communications and I would like to thank both of them.

This paper was written while I was a postdoctoral fellow at the Centre de recherches math\'ematiques in Montr\'eal, which I would like to thank for the financial support.

%%%%%%%%%%%%%%%%%%%%%%%%%%%%%%%%%%%%%%%%%%%%%%%%%%%%%%%%%%%%%%%%%%%%%%%%%%%%%%%%%%%%%%%%%%%%%%%%%%%%%%%%%%%%%%%%%%%%%%%%%%%%
%%%%%%%%%%%%%%%%%%%%%%%%%%%%%%%%%%%%%%%%%%%%%%%%%%%%%%%%%%%%%%%%%%%%%%%%%%%%%%%%%%%%%%%%%%%%%%%%%%%%%%%%%%%%%%%%%%%%%%%%%%%%
%%%%%%%%%%%%%%%%%%%%%%%%%%%%%%%%%%%%%%%%%%%%%%%%%%%%%%%%%%%%%%%%%%%%%%%%%%%%%%%%%%%%%%%%%%%%%%%%%%%%%%%%%%%%%%%%%%%%%%%%%%%%
%%%%%%%%%%%%%%%%%%%%%%%%%%%%%%%%%%%%%%%%%%%%%%%%%%%%%%%%%%%%%%%%%%%%%%%%%%%%%%%%%%%%%%%%%%%%%%%%%%%%%%%%%%%%%%%%%%%%%%%%%%%%
%%%%%%%%%%%%%%%%%%%%%%%%%%%%%%%%%%%%%%%%%%%%%%%%%%%%%%%%%%%%%%%%%%%%%%%%%%%%%%%%%%%%%%%%%%%%%%%%%%%%%%%%%%%%%%%%%%%%%%%%%%%%

\bibliographystyle{PTW02}

\begin{thebibliography}{99}
\bibitem[B]{bombieri} E. Bombieri,\,
{\sl Maggiorazione del resto nel ``Primzahlsatz'' col metodo di Erd\H os-Selberg.} (Italian) Ist. Lombardo Accad. Sci. Lett. Rend. A {\bf 96} (1962), 343--350.
\bibitem[D]{daboussi} H. Daboussi,\,
{\sl Sur le th\'eor\`eme des nombres premiers.} (French. English summary) [On the prime number theorem] C. R. Acad. Sci. Paris S�r. I Math. {\bf 298} (1984), no. 8, 161--164.
\bibitem[Da]{Dav} H. Davenport,\,
{\sl Multiplicative number theory.} Third edition. Revised and with a preface by Hugh L. Montgomery. Graduate Texts in Mathematics, 74. Springer-Verlag, New York, 2000.
\bibitem[DS]{DiamSt} H. G. Diamond and J. Steinig,\,
{\sl An elementary proof of the prime number theorem with a remainder term.} Invent. Math. {\bf 11} (1970), 199--258.
\bibitem[El]{el1} P. D. T. A. Elliott,\,
{\sl Probabilistic number theory. I. Mean-value theorems.} Grundlehren der Mathematischen Wissenschaften [Fundamental Principles of Mathematical Science], 239. Springer-Verlag, New York-Berlin, 1979.
\bibitem[Er]{Erd} P. Erd\H os,\,
{\sl On a new method in elementary number theory which leads to an elementary proof of the prime number theorem.} Proc. Nat. Acad. Sci. U. S. A. {\bf 35}, (1949), 374--384.
\bibitem[Fo]{Fo} K. Ford,\,
{\sl Vinogradov's integral and bounds for the Riemann zeta function,} Proc. London Math. Soc. {\bf 85} (2002), 565--633.
\bibitem[FI78]{fi} J. Friedlander and H. Iwaniec,
{\sl On Bombieri's asymptotic sieve.} Ann. Scuola Norm. Sup. Pisa
Cl. Sci. (4)  {\bf 5}  (1978), no. 4, 719--756.
\bibitem[Go]{goldfeld} D. M. Goldfeld,\,
{\sl The elementary proof of the prime number theorem: an historical perspective.} Number theory (New York, 2003), 179--192, Springer, New York, 2004.
\bibitem[Gr]{granville} A. Granville,\,
{\sl Different approaches to the distribution of prime numbers.} Milan J. Math. {\bf 78} (2009), 1--25.
\bibitem[GS]{gs} A. Granville and K. Soundararajan,\,
{\sl Multiplicative number theory: The pretentious approach.} To appear.
\bibitem[GS03]{gs1} A. Granville and K. Soundararajan,\
{\sl Decay of mean values of multiplicative functions.} Canad. J. Math. {\bf 55} (2003), no. 6, 1191--1230.
\bibitem[GS08]{gs2} A. Granville and K. Soundararajan,\,
{\sl Pretentious multiplicative functions and an inequality for the zeta-function.} Anatomy of integers, 191�197, CRM Proc. Lecture Notes, 46, Amer. Math. Soc., Providence, RI, 2008.
\bibitem[Ha71]{hal2} G. Hal\'asz,\,
{\sl On the distribution of additive and the mean values of multiplicative arithmetic functions.} Studia Sci. Math. Hungar. {\bf 6} (1971), 211--233.
\bibitem[Ha75]{hal3} G. Hal\'asz,\,
{\sl On the distribution of additive arithmetic functions.} Collection of articles in memory of Juriĭ Vladimirovič Linnik, Acta Arith. {\bf 27} (1975), 143--152.
\bibitem[HR]{halb} H. Halberstam and H.--E. Richert,\,
{\sl Sieve methods.} London Mathematical Society Monographs, No. 4. Academic Press [A subsidiary of Harcourt Brace Jovanovich, Publishers], London-New York, 1974.
\bibitem[IK]{ik} H. Iwaniec and E. Kowalski,\,
{\sl Analytic number theory.} American Mathematical Society Colloquium Publications, 53, American Mathematical Society, Providence, RI, 2004.
\bibitem[Kor]{korobov} N. M. Korobov,\,
{\sl Estimates of trigonometric sums and their applications.} (Russian) Uspehi Mat. Nauk {\bf 13} (1958), no. 4 (82), 185--192.
\bibitem[Kou]{kou} D. Koukoulopoulos,\,
{\sl On multiplicative functions which are small on average.} Geom. Funct. Anal., to appear. arXiv:1111.2659.
\bibitem[LZ]{LZ} A. Languasco and A. Zaccagnini,\,
{\sl A note on Mertens' formula for arithmetic progressions.} J. Number Theory {\bf 127} (2007), no. 1, 37--46.
\bibitem[Mo78]{mont} H. L. Montgomery,\,
{\sl A note on the mean values of multiplicative functions,} Inst. Mittag-Leffler (1978), Report no. 17.
\bibitem[Mo94]{mont2} H. L. Montgomery,\,
{\sl Ten lectures on the interface between analytic number theory and harmonic analysis.} CBMS Regional Conference Series in Mathematics, 84. Published for the Conference Board of the Mathematical Sciences, Washington, DC; by the American Mathematical Society, Providence, RI, 1994.
\bibitem[MV01]{mv2} H. L. Montgomery and R. C. Vaughan,\,
{\sl Mean values of multiplicative functions.} Period. Math. Hungar. {\bf 43} (2001), no. 1--2, 199--214.
\bibitem[MV07]{mv} H. L. Montgomery and R. C. Vaughan,\,
{\sl Multiplicative number theory. I. Classical theory.} Cambridge Studies in Advanced Mathematics, 97, Cambridge University Press, Cambridge, 2007.
\bibitem[Pi73]{Pi73} J. Pintz,\,
{\sl On Siegel's theorem.} Collection of articles dedicated to Carl Ludwig Siegel on the occasion of his seventy-fifth birthday, V. Acta Arith. {\bf 24} (1973/74), 543--551.
\bibitem[Pi76]{Pi76} J. Pintz,\,
{\sl Elementary methods in the theory of $L$-functions. I. Hecke's theorem.}  Acta Arith. {\bf 31}  (1976), no. 1, 53--60.
\bibitem[Se]{Sel1} A. Selberg,\,
{\sl An elementary proof of the prime-number theorem.} Ann. of Math. (2) {\bf 50}, (1949), 305--313.
\bibitem[Si]{Si} C. L. Siegel,\,
{\sl \"Uber die Classenzahl quadratischer Zahlk\"orper.} (German) Acta Arith. {\bf 1}, (1935), 83--86.
\bibitem[Ta]{Ta} T. Tatuzawa,\,
{\sl On a theorem of Siegel.} Jap. J. Math. 21 (1951), 163--178 (1952).
\bibitem[Te]{ten} G. Tenenbaum,\,
{\sl Introduction \`a la th\'eorie analytique et probabiliste des nombres.} Third edition, coll. \'Echelles, Belin, 2008.
\bibitem[Ti]{titch} E. C. Titchmarsh,
{\sl The theory of the Riemann zeta-function.} Second edition. Edited and with a preface by D. R. Heath-Brown. The Clarendon Press, Oxford University Press, New York, 1986.
\bibitem[Vi]{vinogradov} I. M. Vinogradov,\,
{\sl A new estimate of the function $\zeta(1+it)$.} (Russian) Izv. Akad. Nauk SSSR. Ser. Mat. {\bf 22} (1958), 161--164.
\bibitem[Wa]{walfisz} A. Walfisz,\,
{\sl Weylsche Exponentialsummen in der neueren Zahlentheorie.} (German) Mathematische Forschungsberichte, XV. VEB Deutscher Verlag der Wissenschaften, Berlin 1963.
\bibitem[Wi]{wirsing} E. Wirsing,\,
{\sl Elementare Beweise des Primzahlsatzes mit Restglied. II.} (German) J. Reine Angew. Math. {\bf 214/215} (1964), 1--18.
\end{thebibliography}

\end{document}